\newtheorem{thm}{Theorem}[section]
\newtheorem{cor}[thm]{Corollary}
\newtheorem{lem}[thm]{Lemma}
\theoremstyle{definition}
\numberwithin{equation}{section}
\def\op{{\mathtt{op}}}
\def\rSU2{{\rm SU(2)}}
\def\TT{{\mathbb T}}
\def\NN{{\mathbb N}}
\def\HS{{\mathtt{HS}}}
\def\ZZ{{\mathbb Z}}
\def\SU2{{\widehat{\rSU2}}}
\def\Gh{{\widehat{G}}}
\DeclareMathOperator{\Tr}{Tr}
\DeclareMathOperator{\diag}{diag}
\begin{document}


\baselineskip=17pt


\title{Hardy-Littlewood-Paley inequalities\\ and Fourier multipliers on SU(2)}

\author{Rauan Akylzhanov
\\
Department of Mathematics
\\
Imperial College London
\\
180 Queen's Gate, London SW7 2AZ
\\
United Kingdom
\\
{\it E-mail address} {\rm r.akylzhanov14@imperial.ac.uk}
\and
Erlan Nurlustanov
\\
Department of Mathematics
\\
Moscow State University, Kazakh Branch
\\
Astana, Kazakhstan
\\
{\it E-mail address} {\rm er-nurs@yandex.ru}
\and
Michael Ruzhansky
\\
Department of Mathematics
\\
Imperial College London
\\
180 Queen's Gate, London SW7 2AZ
\\
United Kingdom
\\
{\it E-mail address} {\rm m.ruzhansky@imperial.ac.uk}
}
\maketitle

\renewcommand{\thefootnote}{}
\footnote{The third
 author was supported by the EPSRC Grant EP/K039407/1.}

\footnote{2010 \emph{Mathematics Subject Classification}: Primary 35G10; 35L30; Secondary 46F05;}
\footnote{\emph{Key words and phrases}: Fourier multipliers, Hardy-Littlewood inequality, Paley inequality, 
noncommutative harmonic analysis.}
\renewcommand{\thefootnote}{\arabic{footnote}}
\setcounter{footnote}{0}


\begin{abstract}
In this paper we prove noncommutative versions of Hardy--Little\-wood 
and Paley inequalities
relating a function and its
Fourier coefficients on the group $\rSU2$.
As a consequence, we use it to obtain 
lower bounds for the $L^p$--$L^q$ norms of Fourier  multipliers on 
the group $\rSU2$, for $1<p\leq 2\leq q<\infty$. 
In addition, we give upper bounds of a similar form, analogous to the known
results on the torus, but now in the noncommutative setting of $\rSU2$.
\end{abstract}
\maketitle


\section{Introduction}

Let $\mathbb{T}^n$ be the $n$-dimensional torus and let $1<p\leq q<\infty$. 
A sequence $\lambda=\{\lambda_k\}_{k\in\mathbb{Z}^n}$ of complex numbers is 
said to be a multiplier of trigonometric Fourier series from 
$L^p(\mathbb{T}^n)$ to $L^q(\mathbb{T}^n)$ if 
the operator 
$$
T_{\lambda} f(x) = \sum_{k\in\mathbb{Z}^n}\lambda_k\widehat{f}(k)e^{ i kx}
$$
is bounded from $L^p(\mathbb{T}^n)$ to $L^q(\mathbb{T}^n)$. 
We denote by ${\bf m}^q_p$ the set of such multipliers.

Many problems in harmonic analysis and partial differential equations can be reduced to the boundedness of multiplier transformations. There arises a natural question of finding sufficient conditions for $\lambda\in{\bf m}^p_p$. 
The topic of ${\bf m}^q_p$ multipliers has been extensively researched. 
Using methods such as the Littlewood-Paley decomposition and Calderon-Zygmund theory,
it is possible to prove H\"ormander-Mihlin type theorems, see e.g.
Mihlin \cite{Mihlin:Lp-Doklady,Mihlin:Lp-LGU},
H\"ormander \cite{Hormander:invariant-LP-Acta-1960}, and later works.
 
Multipliers have been then analysed in a variety of different settings, see e.g.
Gaudry \cite{Gaudry:LpLq-loc-cpt-abel-gps-PJM-1966}, 
Cowling \cite{Cowling:PhD},
Vretare \cite{Vretare:MS-1974}.
The literature on the spectral multipliers is too rich to be reviewed here,
see e.g. a recent paper 
\cite{Cowling-Klima-Sikora:spectral-TAMS-2011}
and references therein. The same is true for multipliers on locally
compact abelian groups, see e.g. 
\cite{Arhancet:CM-2012}, or for Fourier or spectral multipliers on symmetric spaces,
see e.g. \cite{Anker:AM-1990} or \cite{Cowling-Giulini-Meda:DMJ-1993}, resp.
We refer to the above and to other papers for further references on the history of
${\bf m}^q_p$ multipliers on spaces of different types.

In this paper we are interested in questions for Fourier multipliers on compact Lie
groups, in which case the literature is much more sparse: in the sequel we will make
a more detailed review of the existing results. Thus, in this paper 
we will be investigating several questions
in the model case of Fourier multipliers on the compact group $\rSU2$.
Although we will not explore it in this paper,
we note that there are links between multipliers on $\rSU2$ and those
on the Heisenberg group, see Ricci and Rubin
\cite{Ricci-Rubin:transferring-F-mult-SU2-AJM-1986}.

In general, most of the multiplier theorems imply that 
$\lambda\in {\bf m}^p_p$ for all $1<p<\infty$ at once. 
In \cite{Stein1970}, Stein raised the question of finding more subtle sufficient conditions for a multiplier to belong 
to some ${\bf m}^p_p$, $p\neq 2$, without implying also that it belongs to all ${\bf m}^p_p$, $1<p<\infty$. 
In \cite{Nurs_Tleukh_Mult}, Nursultanov and Tleukhanova provided conditions on 
$\lambda=\{\lambda_k\}_{k\in\mathbb{Z}}$ to belong to ${\bf m}^q_p$ for the range
$1<p\leq 2\leq q <\infty$. In particular, they established lower and upper bounds for the 
norms of multiplier $\lambda\in{\bf m}^q_p$ which depend on parameters $p$ and $q$. 
Thus, this provided a partial answer to Stein's question.  Let us recall this result in the case $n=1$:

\begin{thm}\label{THM:NT}
Let $1<p\leq 2 \leq q <\infty$ and let 
$M_0$ denote the set of all finite arithmetic sequences in $\mathbb{Z}$. Then the following inequalities hold:
$$
	\sup_{Q\in M_0}\frac1{|Q|^{1+\frac1q-\frac1p}}\left|\sum_{m\in Q}\lambda_m\right|
\lesssim
\|T_\lambda\|_{L^p\to L^q}
\lesssim
\sup_{k\in\mathbb{N}}\frac1{k^{1+\frac1q-\frac1p}}\sum^k_{m=1}\lambda^{*}_m,
$$
where $\lambda^*_m$ is a non-increasing rearrangement of $\lambda_m$, and $|Q|$ is the number of elements in the 
arithmetic progression $Q$ .
\end{thm}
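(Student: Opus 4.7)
The plan is to prove the two inequalities by quite different techniques: the lower bound by testing $T_\lambda$ against an explicit Dirichlet-type polynomial and dualizing via Parseval; the upper bound by first observing that the right-hand side is equivalent (for non-increasing rearrangements) to a weak-$\ell^r$ quasi-norm on $\lambda$ and then invoking a Paley-type multiplier estimate.

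\textbf{Lower bound.} For a fixed arithmetic progression $Q=\{a+jd:0\le j\le N-1\}$ with $|Q|=N$, I would test against
\[
f(x)=e^{iax}\sum_{j=0}^{N-1}e^{ijdx},\qquad \widehat f=\mathbf{1}_Q.
\]
Standard Dirichlet-kernel estimates give $\|f\|_{L^s(\TT)}\asymp N^{1-1/s}$ for all $1<s<\infty$. By Parseval,
\[
\int_\TT (T_\lambda f)(x)\overline{f(x)}\,dx=\sum_{k\in\ZZ}\lambda_k|\widehat f(k)|^2=\sum_{m\in Q}\lambda_m,
\]
while H\"older's inequality together with the definition of the operator norm bounds the left-hand side by $\|T_\lambda\|_{L^p\to L^q}\|f\|_{L^p}\|f\|_{L^{q'}}\asymp \|T_\lambda\|_{L^p\to L^q}\,N^{1+1/q-1/p}$. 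Rearranging and taking the supremum over $Q\in M_0$ yields the left-hand inequality.

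\textbf{Upper bound.} Denote the right-hand side by $A$. Because $\lambda_m^*$ is non-increasing, the finiteness of $A$ is equivalent (up to constants) to the weak-$\ell^r$ bound $\sup_m m^{1/r}\lambda_m^*\lesssim A$, with $1/r:=1/p-1/q$; so it suffices to prove the Paley-type estimate
\[
\|T_\lambda\|_{L^p\to L^q}\lesssim\|\lambda\|_{\ell^{r,\infty}}.
\]
The underlying strong-type version $\|T_\lambda\|_{L^p\to L^q}\le\|\lambda\|_{\ell^r}$ follows by sandwiching H\"older's inequality on sequences ($1/q'=1/r+1/p'$) between the two Hausdorff-Young inequalities ($\|T_\lambda f\|_{L^q}\le\|\widehat{T_\lambda f}\|_{\ell^{q'}}$ for $q\ge 2$ and $\|\widehat f\|_{\ell^{p'}}\le\|f\|_{L^p}$ for $p\le 2$). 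To upgrade $\ell^r$ to $\ell^{r,\infty}$, I would perform a dyadic decomposition $\lambda=\sum_j\lambda^{(j)}$ on the level sets $\{2^j\le|\lambda_m|<2^{j+1}\}$, bound each $\|\lambda^{(j)}\|_{\ell^{r_0}}$ and $\|\lambda^{(j)}\|_{\ell^{r_1}}$ using the weak-$\ell^r$ control on the cardinality of these level sets for flanking exponents $r_0<r<r_1$, and apply the strong-type estimate at these two endpoints together with real interpolation (equivalently, the identification $(\ell^{r_0},\ell^{r_1})_{\theta,\infty}=\ell^{r,\infty}$ for the linear map $\lambda\mapsto T_\lambda$), summing the resulting geometric series via Marcinkiewicz interpolation.

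\textbf{Main obstacle.} The lower bound is an elementary Parseval computation. The real work is in the upper bound: the immediate H\"older $+$ Hausdorff-Young chain only yields the strong $\ell^r$-bound, and upgrading to $\ell^{r,\infty}$ (equivalently, to the Ces\`aro-averaged quantity $A$ in the statement) requires the dyadic-plus-Marcinkiewicz step to be arranged so that the series in $j$ converges geometrically rather than with the naive logarithmic loss that arises from using the strong-$\ell^r$ bound directly at each dyadic level.
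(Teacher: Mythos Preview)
The paper does \emph{not} prove Theorem~\ref{THM:NT}; it is quoted from \cite{Nurs_Tleukh_Mult} as a known result on the circle, and the paper's own contribution is the $\rSU2$ analogue (Theorems~\ref{THM:lower-bound} and~\ref{THM:upper}). So there is no ``paper's proof'' to compare against directly, but one can compare your sketch with the methods the paper uses for the $\rSU2$ versions.

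Your lower bound is correct and is precisely the Dirichlet-kernel test the paper also employs (for \eqref{SU_conds_2} the paper tests against the character $\chi_{l_0}$, which is the $\rSU2$ Dirichlet kernel, and uses the same $\|D_N\|_{L^s}\asymp N^{1-1/s}$ estimate together with Theorem~\ref{Akylzhanov_1} in place of Parseval). Your pairing argument is in fact slightly cleaner than the paper's route, which passes through the auxiliary Hardy--Littlewood-type inequality \eqref{THM:necess_SU2}.

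For the upper bound your identification $\displaystyle\sup_k k^{-(1-1/r)}\sum_{m=1}^k\lambda_m^*\asymp\|\lambda\|_{\ell^{r,\infty}}$ (valid since $r>1$) and the strong $\ell^r$ bound via two Hausdorff--Young inequalities are both correct. The paper, however, does \emph{not} upgrade $\ell^r$ to $\ell^{r,\infty}$ by dyadic decomposition of $\lambda$; instead it proves a Paley inequality (Theorem~\ref{THM:Paley_inequality}) by Marcinkiewicz interpolation applied to the map $f\mapsto\widehat f$ between weak $(1,1)$ and strong $(2,2)$ endpoints, then interpolates the resulting weighted $\ell^p$ bound with Hausdorff--Young (Corollary~\ref{Cor:general_Paley_inequality}) and finally specialises $b=q'$, $\sigma=|\lambda|^r$. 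Your route is genuinely different, and the ``main obstacle'' you flag is real: if you interpolate each dyadic piece $T_{\lambda^{(j)}}$ back to the target pair $(p,q)$ the exponents cancel exactly and you only get $O(1)$ per block, hence a divergent sum. The invocation of ``Marcinkiewicz interpolation'' at that point does not resolve this, and real interpolation of $\lambda\mapsto T_\lambda$ between $\ell^{r_0}\to\mathcal L(L^p,L^{q_0})$ and $\ell^{r_1}\to\mathcal L(L^p,L^{q_1})$ lands you in $L^{q,\infty}$, not $L^q$. A clean fix that stays within your framework: use only \emph{one} flanking exponent $r_1>r$ (equivalently $q_1>q$), observe that $\sum_j\|\lambda^{(j)}\|_{\ell^{r_1}}\lesssim\|\lambda\|_{\ell^{r,\infty}}$ is a convergent geometric series, apply the strong bound at $(p,q_1)$, and then use the trivial embedding $L^{q_1}(\TT)\hookrightarrow L^q(\TT)$ on the finite-measure torus.
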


In this paper we study the noncommutative versions of this and other related results.
As a model case, we concentrate on analysing
Fourier multipliers between Lebesgue spaces on the group $\rSU2$ of $2\times 2$ unitary
matrices with determinant one. 
Sufficient conditions for Fourier multipliers on $\rSU2$ to be bounded on $L^p$-spaces have been analysed
by Coifman-Weiss 
\cite{Coifman-Weiss:SU2-Argentina-1970} and
Coifman-de Guzman \cite{Coifman-deGuzman:SU2-Argentina-1970}, see also Chapter 5 in
Coifman and Weiss' book \cite{coifman+weiss_lnm}, and are given in terms of the Clebsch-Gordan coefficients
of representations on the group $\rSU2$. A more general perspective was provided in
\cite{RuWi2013} where conditions on Fourier multipliers to be bounded on $L^p$ were obtained for 
general compact Lie groups, and Mihlin-H\"ormander theorems on general compact Lie groups have been established in \cite{RuWi2015}.

Results about spectral multipliers are more known, for functions of the Laplacian
(N. Weiss \cite{Weiss_N:Lp-biinvariant} or 
Coifman and Weiss \cite{Coifman-Weiss:central-multipliers-BAMS}), 
or of the sub-Laplacian on $\rSU2$,
see Cowling and Sikora \cite{Cowling-Sikora:SU2-sublaplacian}.
However, following \cite{Coifman-Weiss:SU2-Argentina-1970, coifman+weiss_lnm, RuWi2013, RuWi2015},
here were are rather interested in Fourier multipliers.

In this paper we obtain lower and upper estimates for the norms of Fourier 
multipliers acting between $L^p$ 
and $L^q$ spaces on $\rSU2$. These estimates explicitly depend on parameters $p$ and $q$.   
Thus, this paper can be regarded as a contribution to Stein's question in the 
noncommutative setting of $\rSU2$. 
At the same time we provide a noncommutative analogue of 
Theorem \ref{THM:NT}.
Briefly, let $A$ be the Fourier multiplier on $\rSU2$ given by 
$$\widehat{Af}(l)=\sigma_A(l)\widehat{f}(l), \textrm{ for }
\sigma_A(l)\in\mathbb C^{(2l+1)\times (2l+1)},\;
l\in\frac12\mathbb{N}_0,$$
where we refer to Section \ref{SEC:Hardy_Littlewood_Paley_inequalities} for definitions and notation
related to the Fourier analysis on $\rSU2$. For such operators, in
Theorem \ref{THM:lower-bound}, for
$1<p\leq 2 \leq q <\infty$, we give two lower bounds, one of which is of the form
\begin{equation}
\label{SU_conds-3}
\sup_{l\in\frac12\mathbb{N}_0}
\frac{1}{(2l+1)^{1+\frac1{q}-\frac1{p}}}
\frac{1}{2l+1}
\left|\Tr \sigma_A(l)\right|
\lesssim
\|A\|_{L^p(\rSU2)\to L^q(\rSU2)}.
\end{equation}
A related upper bound 
\begin{equation}
\label{transition3}
	\|A\|_{L^p(\rSU2)\to L^q(\rSU2)} 
\lesssim
\sup_{s>0}
s
\left(
\sum\limits_{\substack{l\in\frac12\mathbb{N}_0\\ \|\sigma_A(l)\|_{op}\geq s}}
(2l+1)^2
\right)^{\frac1p-\frac1q}.
\end{equation}
will be given in Theorem \ref{THM:upper}.

The proof of the lower bound
is based on the new inequalities describing the relationship between  
the ``size'' of a function and the 
``size'' of its Fourier transform. These inequalities can be viewed as a noncommutative 
$\rSU2$-version of the
Hardy-Littlewood inequalities obtained by Hardy and Littlewood in \cite{HL}.
To explain this briefly, we recall that in \cite{HL}, Hardy and Littlewood have shown
that for $1<p\leq 2$ and $f\in L^p(\TT)$, the following inequality holds true:
\begin{equation}
\label{H_L_inequality-0}
	\sum_{m \in \ZZ}{(1+|m|)}^{p-2}|\widehat{f}(m)|^{p} \leq K\|f\|^p_{L^p(\TT)},
\end{equation}
arguing this to be a suitable extension of the Plancherel identity to $L^p$-spaces.
While we refer to Section \ref{necessary_background} and to
Theorem \ref{THM:Hardy_Littlewood} for more details on this, our analogue for this is
the inequality 
\begin{equation}
\label{sufficient_SU-2-0}
	\sum_{l\in\frac12\mathbb{N}_0} (2l+1)
	(2l+1)^{\frac52(p-2)}\|\widehat{f}(l)\|^p_{\HS}
\leq c \|f\|^p_{L^p(\rSU2)},\quad 1<p\leq 2,
\end{equation}
which for $p=2$ gives the ordinary Plancherel identity on $\rSU2$,
see \eqref{EQ:plancherel}. We refer to
Theorem \ref{Akylzhanov_2} for this and to Corollary 
\ref{COR:duality}  for the dual statement.
For $p\geq 2$, the necessary conditions for a function to
belong to $L^p$ are usually harder to obtain.
In Theorem \ref{Akylzhanov_1} we give such a result for $2\leq p <\infty$ which takes the form
\begin{equation}
\label{necess_SU-0}
	\sum_{l\in\frac12\mathbb{N}_0}(2l+1)^{p-2}\left(\sup_{\substack{k\in\frac12\mathbb{N}_0 \\ k\geq l}}\frac1{2k+1}\left|\Tr\widehat{f}(k)\right|\right)^p
\leq 
c\|f\|^p_{L^p(\rSU2)}, \quad 2\leq p <\infty.
\end{equation}
In turn, this gives a noncommutative analogue to the known similar result on the circle
(which we recall in Theorem \ref{NED}). Similar to \eqref{SU_conds-3}, the averaged trace appears 
also in \eqref{necess_SU-0} -- it is the usual trace divided by the number of diagonal
elements in the matrix.

In \cite{Hormander:invariant-LP-Acta-1960} Ho\"rmander proved a Paley-type inequality for 
the Fourier transform on $\mathbb{R}^N$. 
In this paper we obtain an analogue of this inequality on the group $\rSU2$.

The results on the group $\rSU2$ are usually quite important since, in view of the 
resolved Poincar\'e conjecture,
they provide information about corresponding transformations on general 
closed simply-connected three-dimensional manifolds   
(see \cite{RT} for a more detailed outline of such relations).
In our context, they give explicit versions of known results on the circle $\TT$ or
on the torus $\TT^{n}$, in the simplest noncommutative setting of $\rSU2$.

At the same time, we note that some results of this paper can be extended to 
Fourier multipliers on general compact Lie groups. 
However, such analysis requires a more abstract approach, 
and will appear elsewhere.
 
The paper is organised as follows. In Section \ref{SEC:Hardy_Littlewood_Paley_inequalities} 
we fix the notation for 
the representation theory of $\rSU2$ and formulate estimates relating functions with its Fourier
coefficients: the $\rSU2$-version of the Hardy--Littlewood and Paley inequalities and further
extensions. In Section \ref{main_results} we formulate and prove the lower bounds for operator
norms of Fourier multipliers, and in Section
\ref{SEC:upper-bounds} the upper bounds.
Our proofs are based on inequalities from Section \ref{techniques}. In Section \ref{SEC:Hardy_Littlewood_Paley_inequalities_Proofs} we complete the proofs of the results presented in previous sections.
 
We shall use the symbol $C$ to denote various positive constants, and $C_{p,q}$ for constants
which may depend only on indices $p$ and $q$.
We shall write $x\lesssim y$  for the relation $|x|\leq C |y|$, and write $x\cong y$ if $x\lesssim y$ and $y\lesssim x$.

The authors would like to thank V\'eronique Fischer for useful remarks.

\label{necessary_background}

\section{Hardy-Littlewood and Paley inequalities on \texorpdfstring{$\rSU2$}{SU(2)}}
\label{SEC:Hardy_Littlewood_Paley_inequalities}
The aim of this section is to discuss necessary conditions and sufficient conditions for the $L^p(\rSU2)$-integrability 
of a function by means of its Fourier coefficients.
The main results of this section are Theorems \ref{Akylzhanov_2},
\ref{THM:Paley_inequality}  and \ref{Akylzhanov_1}.
These results will provide a noncommutative version of known results of this
type on the circle $\TT$.
The proofs of most of the results of this Section are given in Section \ref{SEC:Hardy_Littlewood_Paley_inequalities_Proofs}.

First, let us fix the notation concerning the representations of the compact Lie group
$\rSU2$. 
There are different types of notation in the literature for the appearing objects - we will follow
the notation of Vilenkin \cite{Vilenkin:BK-eng-1968}, as well as that in
\cite{RT,Ruzhansky+Turunen-IMRN}.
Let us identify $z=(z_1,z_2)\in\mathbb{C}^{1\times2}$, and let $\mathbb{C}[z_1,z_2]$ 
be the space of two-variable polynomials $f\colon \mathbb{C}^2\to\mathbb{C}$. Consider mappings
$$
	t^l\colon \rSU2 \to GL(V_l),\quad (t^l(u)f)(z)=f(zu),
$$
where $l\in\frac12\mathbb{N}_0$ is called the quantum number, 
$\mathbb{N}_0=\mathbb{N}\cup\{0\}$,
and where $V_l$ is the $(2l+1)$-dimensional subspace of $\mathbb{C}[z_1,z_2]$ containing the homogeneous polynomials of order $2l\in\mathbb{N}_0$, i.e.
$$
	V_l=\{f\in\mathbb{C}[z_1,z_2]\colon f(z_1,z_2)=\sum^{2l}_{k=0}a_kz^k_1z^{2l-k}_2,\quad 
	\{a_k\}^{2l}_{k=0}\subset \mathbb{C} \}.
$$
The unitary dual of $\rSU2$ is
$$
	\SU2\cong \{t^l\in {\rm Hom}(\rSU2,{\rm U}(2l+1))\colon l\in\frac12\mathbb{N}_0\},
$$
where ${\rm U}(d)\subset \mathbb{C}^{d\times d}$ is the unitary matrix group, and matrix
components $t^l_{mn}\in C^{\infty}(\rSU2)$ can be written as products of exponentials and Legendre-Jacobi functions,
see Vilenkin \cite{Vilenkin:BK-eng-1968}. It is also customary to let the indices $m,n$ to range from
$-l$ to $l$, equi-spaced with step one.
We define the Fourier transform on $\rSU2$ by 
$$
	\widehat{f}(l):=\int\limits_{\rSU2} f(u)t^l(u)^*\,du,
$$
with the inverse Fourier transform (Fourier series) given by 
$$
	f(u)=\sum_{l\in\frac12\mathbb{N}_0}(2l+1)\Tr \widehat{f}(l)t^l(u).
$$
The Peter-Weyl theorem on $\rSU2$ implies, in particular, that this pair of transforms are
inverse to each 
other and that the Plancherel identity
\begin{equation}
\label{EQ:plancherel}
\|f\|^2_{L^2(\rSU2)} = 
\sum_{l\in\frac12\mathbb{N}_0}(2l+1)\|\widehat{f}(l)\|^2_{\HS}=:\|\widehat{f}\|^2_{\ell^2(\rSU2)}
\end{equation}
holds true for all $f\in L^2(\rSU2)$. 
Here $\|\widehat{f}(l)\|^2_{\HS}=\Tr \widehat{f}(l)\widehat{f}(l)^*$ denotes the Hilbert-Schmidt norm of matrices.
For more details on the Fourier transform on $\rSU2$ and on arbitrary compact Lie groups,
and for subsequent Fourier and operator analysis we can refer to \cite{RT}.

\label{techniques}

There are different ways to compare the ``sizes'' of $f$ and $\widehat{f}$. 
Apart from the Plancherel's identity \eqref{EQ:plancherel}, there are other important relations, such as
the Hausdorff-Young or the Riesz-Fischer theorems. However, such estimates usually require the
change of the exponent $p$ in $L^p$-measurements of $f$ and $\widehat{f}$.
Our first results deal with comparing $f$ and $\widehat{f}$ in the same scale of $L^p$-measurements.
Let us remark on the background of this problem. 
In \cite[Theorems 10 and 11]{HL}, 
Hardy and Littlewood proved the following generalisation of the Plancherel's identity. 
\begin{thm}[Hardy--Littlewood \cite{HL}] 
\label{THM:Hardy_Littlewood}
The following holds.
\begin{enumerate}
\item Let $1<p\leq 2$. If $f\in L^p(\TT)$, then
\begin{equation}
\label{H_L_inequality}
	\sum_{m \in \ZZ}{(1+|m|)}^{p-2}|\widehat{f}(m)|^{p} \leq K_p\|f\|^p_{L^p(\TT)},
\end{equation}
where $K_p$ is a constant which depends only on $p$.

\item  Let $2\leq p<\infty$.  
If $\{\widehat{f}(m)\}_{m\in\ZZ}$ is a sequence of complex numbers such that 
\begin{equation}
\label{H_L_condition}
\displaystyle\sum_{m\in\ZZ} (1+|m|)^{p-2}|\widehat{f}(m)|^p<\infty,
\end{equation}
then there is a function $f\in L^p(\TT)$ with Fourier coefficients given by $\widehat{f}(m)$, and 
$$
	\|f\|^p_{L^p(\TT)}\leq K_{p}^{\prime}\sum_{m\in\ZZ} (1+|m|)^{p-2}|\widehat{f}(m)|^p.
$$
\end{enumerate}
\end{thm}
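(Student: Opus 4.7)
The plan is to reduce both parts of the theorem to a single statement: the operator $Tf(m) := (1+|m|)\widehat{f}(m)$ is bounded from $L^p(\TT)$ to $L^p(\ZZ,\mu)$ for all $1<p\leq 2$, where $\mu$ is the counting measure on $\ZZ$ weighted by $(1+|m|)^{-2}$. Part (1) is exactly this statement, while part (2) will be deduced from it by a duality argument together with an approximation by partial sums.

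To establish the mapping property of $T$ I would interpolate between the two endpoints. At $p=2$, Plancherel's theorem immediately gives
$$\|Tf\|_{L^2(\ZZ,\mu)}^2 = \sum_{m\in\ZZ}(1+|m|)^2|\widehat{f}(m)|^2 (1+|m|)^{-2} = \|f\|_{L^2(\TT)}^2,$$
so $T$ is of strong type $(2,2)$. For the other endpoint I would show that $T$ is of weak type $(1,1)$ with respect to $\mu$: since $|\widehat{f}(m)|\leq \|f\|_{L^1(\TT)}$, the level set $\{m\colon |Tf(m)|>\lambda\}$ is contained in $\{m\colon 1+|m|>\lambda/\|f\|_{L^1}\}$, and summing the tail of $(1+|m|)^{-2}$ bounds its $\mu$-measure by $C\|f\|_{L^1(\TT)}/\lambda$. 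The Marcinkiewicz interpolation theorem then yields the $L^p(\TT)\to L^p(\ZZ,\mu)$ bound for every $1<p\leq 2$, which is precisely \eqref{H_L_inequality}.

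For part (2), fix $p\in[2,\infty)$ and let $p'\in(1,2]$ be its conjugate exponent. Computing the adjoint of the operator $S\colon L^{p'}(\TT)\to L^{p'}(\ZZ,\mu)$ from part (1), with respect to the natural pairings, yields
$$S^*g(x)=\sum_{m\in\ZZ}g(m)(1+|m|)^{-1}e^{imx},$$
and the dual statement is $S^*\colon L^p(\ZZ,\mu)\to L^p(\TT)$. Substituting $c_m := g(m)(1+|m|)^{-1}$ turns this bound into the desired inequality with $\widehat{f}(m)$ replaced by $c_m$. To produce an actual function $f\in L^p(\TT)$ realising the given sequence as its Fourier coefficients, I would apply the inequality to the partial sums $f_N(x):=\sum_{|m|\leq N}c_m e^{imx}$ to obtain a uniform $L^p$-bound, extract a weak limit in $L^p(\TT)$, and identify its Fourier coefficients with the original $c_m$ by testing against trigonometric polynomials.

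I expect the weak-type $(1,1)$ estimate to be the technically decisive step, as it is precisely the interaction between the trivial $\ell^\infty$-bound on $\widehat{f}$ and the summability of the weight $(1+|m|)^{-2}$ that distinguishes this situation from the classical Hausdorff--Young setting. The interpolation and duality steps are then essentially automatic, and the limiting argument in part (2) is standard.
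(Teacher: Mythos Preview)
The paper does not actually prove this theorem: it is quoted verbatim from Hardy and Littlewood \cite{HL} as classical background, so there is no ``paper's own proof'' to compare against. That said, your argument is correct, and it is precisely the strategy the paper adopts when proving the $\rSU2$ analogue (Theorem~\ref{Akylzhanov_2}): there too one defines a sublinear map into a weighted sequence space, checks strong type $(2,2)$ via Plancherel, weak type $(1,1)$ via the trivial bound $\|\widehat{f}(l)\|_{\HS}\leq\sqrt{2l+1}\,\|f\|_{L^1}$ combined with a tail estimate for the weight, and then invokes Marcinkiewicz interpolation. Your duality derivation of part~(2) likewise parallels the proof of Corollary~\ref{COR:duality}, though the paper writes the duality out explicitly with Plancherel and H\"older rather than identifying the adjoint operator; the two are equivalent. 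The weak-limit step you add to produce the function $f$ from its coefficient sequence is a detail the paper does not spell out in the $\rSU2$ case either, but it is needed and your treatment of it is fine.
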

Hewitt and Ross \cite{HR} generalised this theorem to the setting of compact abelian groups.
Now, we give an analogue
of the Hardy--Littlewood Theorem \ref{THM:Hardy_Littlewood} in the noncommutative setting 
of the compact group $\rSU2$.
\begin{thm}\label{Akylzhanov_2}
If $1<p\leq 2$ and $f\in L^p(\rSU2)$, then we have
\begin{equation}
\label{sufficient_SU}
	\sum_{l\in\frac12\mathbb{N}_0}(2l+1)^{\frac52p-4}\|\widehat{f}(l)\|^p_{\HS}
\leq c_p \|f\|^p_{L^p(\rSU2)}.
\end{equation}
\end{thm}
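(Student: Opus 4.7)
My plan is to obtain \eqref{sufficient_SU} as a corollary of a Paley-type inequality on $\rSU2$ (which will appear in this section as Theorem \ref{THM:Paley_inequality}), following the same strategy by which the classical inequality \eqref{H_L_inequality} on $\TT$ is a consequence of the classical Paley inequality with weight $(1+|n|)^{-1}$. The Paley inequality I will use asserts that
\[
\sum_{l\in\frac12\NN_0}(2l+1)^{2-\frac{p}{2}}\,\varphi(l)^{2-p}\,\|\widehat{f}(l)\|_{\HS}^{p}\;\leq\; C_p\,M_\varphi^{2-p}\,\|f\|_{L^p(\rSU2)}^{p}
\]
for $1<p\leq 2$ and any $\varphi\colon\tfrac12\NN_0\to(0,\infty)$ satisfying
\[
M_\varphi := \sup_{s>0}\;s\sum_{\substack{l\in\frac12\NN_0\\ \varphi(l)\geq s}}(2l+1)^{2}\;<\;\infty.
\]
The prefactor $(2l+1)^{2-p/2}$ is the one consistent with the Plancherel identity \eqref{EQ:plancherel} at $p=2$, where it collapses to $2l+1$ and $\varphi^{2-p}$ disappears.

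\noindent\textbf{Choice of weight.} To produce the exponent $\tfrac{5p-8}{2}$ in \eqref{sufficient_SU}, the plan is to apply the Paley inequality with
\[
\varphi(l) := (2l+1)^{-3},\qquad l\in\tfrac12\NN_0.
\]
Writing $n=2l+1$, which enumerates the positive integers as $l$ ranges over $\tfrac12\NN_0$, the condition $\varphi(l)\geq s$ is equivalent to $n\leq s^{-1/3}$, so
\[
s\sum_{\varphi(l)\geq s}(2l+1)^{2}\;=\;s\sum_{n\leq s^{-1/3}} n^{2}\;\lesssim\; s\cdot s^{-1}\;=\;1,
\]
giving $M_\varphi\lesssim 1$. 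Substituting this $\varphi$ into the Paley inequality, the exponent of $(2l+1)$ on the left becomes
\[
\bigl(2-\tfrac{p}{2}\bigr)-3(2-p)\;=\;\tfrac{5p-8}{2},
\]
which is precisely the exponent in \eqref{sufficient_SU}. The constant $c_p$ is controlled by the Paley constant $C_p$ together with $M_\varphi^{2-p}$, both of which depend only on $p$.

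\noindent\textbf{The hard part.} The genuine work lies in establishing the Paley inequality itself; once it is in hand, Theorem \ref{Akylzhanov_2} is the bookkeeping carried out above. The delicate feature relative to the commutative case $\TT$ is that two distinct dimension weights will enter — the Plancherel \emph{measure} $(2l+1)^{2}$ controlling $M_\varphi$, and the prefactor $(2l+1)^{2-p/2}$ in the Paley sum. Producing them simultaneously will require Marcinkiewicz interpolation between the $L^2$ Plancherel identity \eqref{EQ:plancherel} and the weak-type endpoint coming from the operator-norm bound $\|\widehat{f}(l)\|_{\op}\leq\|f\|_{L^1(\rSU2)}$, together with a layer-cake decomposition of the Paley sum over the dyadic slices $\{l:2^{k}\leq\varphi(l)<2^{k+1}\}$ weighted by their Plancherel measure $\sum_{l\in\text{slice}}(2l+1)^{2}$. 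That is where all the matrix-valued work lives; the passage to \eqref{sufficient_SU} is the exponent match verified in the previous paragraph.
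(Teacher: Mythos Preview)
Your reduction is correct: the Paley inequality (Theorem \ref{THM:Paley_inequality}) with the scalar weight $\varphi(l)=(2l+1)^{-3}$ (equivalently $\sigma(l)=(2l+1)^{-3}I_{2l+1}$) has $M_\varphi\lesssim 1$, and the exponent arithmetic $\bigl(2-\tfrac p2\bigr)-3(2-p)=\tfrac{5p}{2}-4$ is exactly what \eqref{sufficient_SU} requires. Your sketch for proving the Paley inequality itself, via Marcinkiewicz interpolation between the Plancherel identity at $p=2$ and the weak $(1,1)$ endpoint coming from $\|\widehat f(l)\|_{\HS}\leq\sqrt{2l+1}\,\|f\|_{L^1}$, is also the right mechanism.

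The paper, however, does not route Theorem \ref{Akylzhanov_2} through Theorem \ref{THM:Paley_inequality}; it proves each independently by a direct Marcinkiewicz argument. For Theorem \ref{Akylzhanov_2} it sets up the sublinear map $Tf=\{(2l+1)^{5/2}\|\widehat f(l)\|_{\HS}\}_{l}$ into $L^p(\tfrac12\NN_0,\nu)$ with point masses $\nu(\{l\})=(2l+1)^{-4}$, checks strong type $(2,2)$ from Plancherel and weak type $(1,1)$ from the same $L^1$ bound you mention, and interpolates. Your approach is more economical in that a single interpolation (the one inside Paley) carries both theorems, and it makes transparent that the Hardy--Littlewood inequality is the specialization of Paley to the particular weight dictated by the Plancherel measure. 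The paper's approach, by contrast, keeps the two theorems self-contained, at the cost of running essentially the same interpolation twice.
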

We can write this in the form more resembling the Plancherel identity, namely, as 
\begin{equation}
\label{sufficient_SU-2}
	\sum_{l\in\frac12\mathbb{N}_0} (2l+1)
	(2l+1)^{\frac52(p-2)}\|\widehat{f}(l)\|^p_{\HS}
\leq c_p \|f\|^p_{L^p(\rSU2)},
\end{equation}
providing a resemblance to both \eqref{H_L_inequality} and 
\eqref{EQ:plancherel}.
By duality, we obtain

\begin{cor} \label{COR:duality} 
If $2\leq p< \infty$ and 
$\sum_{l\in\frac12\mathbb{N}_0}(2l+1)^{\frac52p-4}\|\widehat{f}(l)\|^p_{\HS}<\infty$,
then $f\in L^p(\rSU2)$ and we have
\begin{equation}
\label{duality}
\|f\|^p_{L^p(\rSU2)}
\leq c_{p}
	\sum_{l\in\frac12\mathbb{N}_0}(2l+1)^{\frac52p-4}\|\widehat{f}(l)\|^p_{\HS}.
\end{equation}
\end{cor}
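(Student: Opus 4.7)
The plan is to deduce this corollary from Theorem \ref{Akylzhanov_2} by the standard duality argument between $L^p$ and $L^{p'}$, where $p'=p/(p-1)$ satisfies $1<p'\leq 2$. First, I would interpret the hypothesis as saying that the formal Fourier series $\sum_{l}(2l+1)\Tr(\widehat{f}(l)t^l(u))$ defines, at least, a distribution on $\rSU2$, and the task is to upgrade this to membership in $L^p(\rSU2)$ with the asserted quantitative bound. Since $L^p(\rSU2)=(L^{p'}(\rSU2))^*$ for $1<p<\infty$, it suffices to show that the linear functional $g \mapsto \langle f,g\rangle$, a priori defined on trigonometric polynomials $g$, extends boundedly to $L^{p'}(\rSU2)$ with norm controlled by the right-hand side of \eqref{duality}.

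For a trigonometric polynomial $g$, the Parseval identity on $\rSU2$ gives
\begin{equation*}
\langle f,g\rangle_{L^2(\rSU2)}
=\sum_{l\in\frac12\NN_0}(2l+1)\Tr\p{\widehat{f}(l)\widehat{g}(l)^*},
\end{equation*}
and the trace is controlled by the Hilbert--Schmidt Cauchy--Schwarz inequality $|\Tr(\widehat{f}(l)\widehat{g}(l)^*)|\leq \n{\widehat{f}(l)}_{\HS}\n{\widehat{g}(l)}_{\HS}$. Next I would split the weight $(2l+1)$ as
\begin{equation*}
(2l+1)=(2l+1)^{\frac{5}{2}-\frac{4}{p}}\cdot (2l+1)^{\frac{5}{2}-\frac{4}{p'}},
\end{equation*}
which is valid because $\tfrac{5}{2}-\tfrac{4}{p}+\tfrac{5}{2}-\tfrac{4}{p'}=5-4=1$, and then apply H\"older's inequality on the index $l$ with exponents $p$ and $p'$. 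This yields
\begin{equation*}
|\langle f,g\rangle|
\leq
\p{\sum_{l}(2l+1)^{\frac{5}{2}p-4}\n{\widehat{f}(l)}_{\HS}^p}^{1/p}
\p{\sum_{l}(2l+1)^{\frac{5}{2}p'-4}\n{\widehat{g}(l)}_{\HS}^{p'}}^{1/p'}.
\end{equation*}

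At this point I invoke Theorem \ref{Akylzhanov_2} applied to $g\in L^{p'}(\rSU2)$ (which is legitimate since $1<p'\leq 2$) to bound the second factor by $c_{p'}^{1/p'}\n{g}_{L^{p'}(\rSU2)}$. Combining the two bounds gives
\begin{equation*}
|\langle f,g\rangle|\leq c_{p'}^{1/p'}\n{g}_{L^{p'}(\rSU2)}
\p{\sum_{l}(2l+1)^{\frac{5}{2}p-4}\n{\widehat{f}(l)}_{\HS}^p}^{1/p},
\end{equation*}
so the functional $g\mapsto \langle f,g\rangle$ is bounded on the dense subspace of trigonometric polynomials in $L^{p'}(\rSU2)$ with the right norm. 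By the Hahn--Banach theorem and the identification $L^p=(L^{p'})^*$, this functional is represented by an element of $L^p(\rSU2)$ whose Fourier coefficients coincide with $\widehat{f}(l)$ by construction, and \eqref{duality} follows with $c_p=c_{p'}^{p/p'}$.

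The computation itself is short; the only delicate point is the logical order, namely that we do not yet know $f\in L^p$ when we begin, so Parseval has to be applied only to the trigonometric polynomial side $g$, and the existence of $f\in L^p$ is extracted from the dual pairing rather than assumed. Once this is set up cleanly, the key algebraic fact doing all the work is the weight identity $(\tfrac{5}{2}-\tfrac{4}{p})+(\tfrac{5}{2}-\tfrac{4}{p'})=1$, which is precisely what makes the Hardy--Littlewood exponents $\tfrac{5}{2}p-4$ and $\tfrac{5}{2}p'-4$ dual to each other against the Plancherel weight $(2l+1)$.
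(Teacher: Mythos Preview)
Your proof is correct and follows essentially the same route as the paper's: write the $L^p$--$L^{p'}$ pairing via Plancherel, bound the trace by $\|\widehat{f}(l)\|_{\HS}\|\widehat{g}(l)\|_{\HS}$, split the weight as $(2l+1)=(2l+1)^{\frac52-\frac4p}(2l+1)^{\frac52-\frac4{p'}}$, apply H\"older, and then invoke Theorem~\ref{Akylzhanov_2} for $g$ with exponent $p'\in(1,2]$. Your treatment of the a priori issue (working with trigonometric polynomials $g$ and extracting $f\in L^p$ from the duality $(L^{p'})^*=L^p$) is in fact more careful than the paper's, which simply writes $\|f\|_{L^p}=\sup_{\|g\|_{L^{p'}}=1}\left|\int f\overline g\right|$ without commenting on this point.
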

For $p=2$, both of these statements reduce to the Plancherel identity
\eqref{EQ:plancherel}.

\medskip
In \cite{Hormander:invariant-LP-Acta-1960} Ho\"rmander proved a Paley-type inequality for 
the Fourier transform on $\mathbb{R}^N$. 
We now give an analogue of this inequality on the group $\rSU2$.

\begin{thm}\label{THM:Paley_inequality} 
Let $1<p\leq 2$. Suppose 
$\{\sigma(l)\}_{l\in\frac12\NN_0}$
 is a sequence of complex matrices 
 $\sigma(l)\in \mathbb C^{(2l+1)\times (2l+1)}$ such that
\begin{equation}
\label{weak_symbol_estimate}
K_{\sigma}:=\sup_{s>0}s\sum\limits_{\substack{l\in\frac12\NN_0\\\|\sigma(l)\|_{\op}\geq s }}(2l+1)^2<\infty.
\end{equation}
Then we have 
\begin{equation}
\label{EQ:Paley_inequality}
\sum\limits_{l\in\frac12\NN_0}(2l+1)^{p(\frac2p-\frac12)} \|\widehat{f}(l)\|^p_{\HS}\|\sigma(l)\|^{2-p}_{\op}
\lesssim
{K_{\sigma}}^{2-p}
\|f\|^p_{L^p(\rSU2)}.
\end{equation}
\end{thm}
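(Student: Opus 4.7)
The plan is to follow the classical Hörmander/Marcinkiewicz interpolation scheme adapted to the noncommutative Fourier transform on $\rSU2$. Set up a measure $\nu$ on the dual $\frac12\NN_0$ by
$$\nu(\{l\})=(2l+1)^2\|\sigma(l)\|_{\op}^2,$$
and consider the sublinear operator
$$Tf(l):=\frac{\|\widehat f(l)\|_{\HS}}{\sqrt{2l+1}\,\|\sigma(l)\|_{\op}}.$$
A quick algebraic check shows that the target estimate \eqref{EQ:Paley_inequality} is exactly the statement $\|Tf\|_{L^p(\nu)}^p\lesssim K_\sigma^{2-p}\|f\|_{L^p(\rSU2)}^p$, because
$$\int |Tf(l)|^p\,d\nu(l)=\sum_{l\in\frac12\NN_0}(2l+1)^{2-p/2}\|\widehat f(l)\|_{\HS}^p\|\sigma(l)\|_{\op}^{2-p}$$
and $2-p/2=p(\tfrac2p-\tfrac12)$.

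First I would verify the two endpoint bounds. For $p=2$, the Plancherel identity \eqref{EQ:plancherel} gives immediately
$$\|Tf\|_{L^2(\nu)}^2=\sum_{l}(2l+1)\|\widehat f(l)\|_{\HS}^2=\|f\|_{L^2(\rSU2)}^2,$$
so $T:L^2\to L^2(\nu)$ with norm one. For the $L^1$ endpoint I use the trivial contraction $\|\widehat f(l)\|_{\op}\leq\|f\|_{L^1}$ together with $\|\widehat f(l)\|_{\HS}\leq\sqrt{2l+1}\,\|\widehat f(l)\|_{\op}$ to obtain the pointwise bound $|Tf(l)|\leq\|f\|_{L^1}/\|\sigma(l)\|_{\op}$. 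Hence for every $\tau>0$,
$$\nu\{l:|Tf(l)|>\tau\}\leq\nu\bigl\{l:\|\sigma(l)\|_{\op}<\|f\|_{L^1}/\tau\bigr\}.$$

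The main technical step, and the only one requiring the hypothesis \eqref{weak_symbol_estimate}, is showing that
$$\nu\{l:\|\sigma(l)\|_{\op}<t\}\lesssim K_\sigma\,t\qquad(t>0).$$
I would get this by rewriting the left-hand side via a layer-cake formula on the counting measure $\mu(\{l\})=(2l+1)^2$:
$$\nu\{\|\sigma\|_{\op}<t\}=\int_{\{\|\sigma\|_{\op}<t\}}\!\|\sigma\|_{\op}^2\,d\mu=\int_0^{t}2r\,\mu\bigl\{r<\|\sigma\|_{\op}<t\bigr\}\,dr,$$
and then bounding $\mu\{\|\sigma\|_{\op}>r\}\leq K_\sigma/r$ by hypothesis, which collapses the integral to $2K_\sigma t$. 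This gives $T:L^1(\rSU2)\to L^{1,\infty}(\nu)$ with norm $\lesssim K_\sigma$.

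With both endpoints in hand the conclusion follows from the Marcinkiewicz interpolation theorem applied to the sublinear operator $T$ between the $\sigma$-finite measure spaces $(\rSU2,du)$ and $(\tfrac12\NN_0,\nu)$: for $1<p<2$,
$$\|T\|_{L^p\to L^p(\nu)}\lesssim K_\sigma^{2/p-1}\cdot 1^{2-2/p}=K_\sigma^{(2-p)/p},$$
which, raised to the $p$-th power, is exactly \eqref{EQ:Paley_inequality}; the case $p=2$ is already contained in the Plancherel identity so the constant there is harmless. The main obstacle is the weighted weak-type computation at the $L^1$ endpoint: one must recognise that the correct base measure is the $\|\sigma\|_{\op}^2$-weighted version of $(2l+1)^2$ so that the distributional hypothesis \eqref{weak_symbol_estimate} can be inserted through the layer-cake identity; everything else is bookkeeping with the $\rSU2$ Fourier transform.
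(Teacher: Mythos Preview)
Your proposal is correct and follows essentially the same route as the paper: define the weighted measure $\nu(\{l\})=(2l+1)^2\|\sigma(l)\|_{\op}^2$ and the sublinear map $Tf(l)=\|\widehat f(l)\|_{\HS}/(\sqrt{2l+1}\,\|\sigma(l)\|_{\op})$, verify strong type $(2,2)$ by Plancherel and weak type $(1,1)$ via the layer-cake estimate $\nu\{\|\sigma\|_{\op}<t\}\lesssim K_\sigma t$, and conclude by Marcinkiewicz interpolation. The paper carries out exactly these steps (it even records the same intermediate bound $\|\widehat f(l)\|_{\HS}\le\sqrt{2l+1}\,\|f\|_{L^1}$ and the same Fubini/layer-cake computation for the weak $(1,1)$ bound), invoking its own Marcinkiewicz-type Theorem~\ref{THM:Marc-Interpol-Un-Dual} in place of the general statement you cite.
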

It will be useful to recall the spaces $\ell^{p}(\SU2)$ on the discrete
unitary dual $\SU2$. For general compact Lie groups these spaces have been 
introduced and studied in \cite[Section 10.3]{RT}. In the particular case of 
$\rSU2$, for a sequence of complex matrices
 $\sigma(l)\in \mathbb C^{(2l+1)\times (2l+1)}$
they can be defined by the finiteness of the norms
\begin{equation}\label{EQ:lp}
\|\sigma\|_{\ell^{p}(\SU2)}:=
\left(\sum\limits_{l\in\frac12\NN_0}(2l+1)^{p(\frac2{p}-\frac12)}\|\sigma(l)\|^{p}_{\HS}
\right)^{\frac1{p}}, \quad 1 \leq p <\infty,
\end{equation}
and
\begin{equation}\label{EQ:linf}
\|\sigma\|_{\ell^{\infty}(\SU2)}:=
\sup\limits_{l\in\frac12\NN_0}(2l+1)^{-\frac12}\|\sigma(l)\|_{\HS}.
\end{equation}
Among other things, it was shown in \cite[Section 10.3]{RT} that these
spaces are interpolation spaces, they satisfy the duality property and, with
$\sigma=\widehat{f}$,
 the  Hausdorff-Young inequality 
\begin{equation}
\label{H-Y}
\left(
\sum\limits_{l\in\frac12\NN_0}(2l+1)^{p'(\frac2{p'}-\frac12)}\|\widehat{f}(l)\|^{p'}_{\HS}
\right)^{\frac1{p'}}
\equiv
\|\widehat{f}\|_{\ell^{p'}(\SU2)}\lesssim \|f\|_{L^p(\rSU2)},\; 1 \leq p \leq 2.
\end{equation}
Further, we recall a result on the interpolation of weighted spaces from \cite{BL2011}:
\begin{thm}[Interpolation of weighted spaces]
\label{THM:L_p-weighted-interpolation}
 Let us write $d\mu_0(x)=\omega_0(x)d\mu(x)$, $d\mu_1(x)=\omega_1(x)d\mu(x)$, and write $L^p(\omega)=L^p(\omega d\mu)$ for the weight $\omega$. Suppose that $0<p_0,p_1<\infty$. Then 
$$
	(L^{p_0}(\omega_0), L^{p_1}(\omega_1))_{\theta,p}=L^p(\omega),
$$
where $0<\theta<1,\frac1p=\frac{1-\theta}{p_0}+\frac{\theta}{p_1}$, and $\omega=w^{p\frac{1-\theta}{p_0}}_0w^{p\frac{\theta}{p_1}}_1$.
\end{thm}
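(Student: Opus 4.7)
The statement is a classical real-interpolation identity for weighted $L^p$-spaces, and the natural route is via the Peetre K-functional:
$$\|f\|_{(A_0,A_1)_{\theta,p}}^p = \int_0^\infty \bigl(t^{-\theta} K(t,f;A_0,A_1)\bigr)^p \frac{dt}{t}, \qquad K(t,f) = \inf_{f=f_0+f_1}\bigl(\|f_0\|_{A_0}+t\|f_1\|_{A_1}\bigr).$$
The plan is to compute $K(t,f;L^{p_0}(\omega_0),L^{p_1}(\omega_1))$ up to multiplicative constants, substitute it into the defining integral, and identify the outcome with $\|f\|_{L^p(\omega)}^p$ for the claimed weight.

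For the K-functional I would exploit the pointwise nature of the weighted $L^{p_i}$-norms. The minimisation is separately convex, so at an optimum one can take $f=f_0+f_1$ with $(f_0(x),f_1(x))$ minimising, for each $x$, the scalar expression $|f_0(x)|^{p_0}\omega_0(x)+t^{p_1}|f_1(x)|^{p_1}\omega_1(x)$ subject to $f_0(x)+f_1(x)=f(x)$. A one-variable calculus exercise yields the minimiser explicitly and gives sharp two-sided bounds of the form
$$K(t,f)\cong \left(\int_X \Phi\bigl(t,\,|f(x)|,\,\omega_0(x),\,\omega_1(x)\bigr)\,d\mu(x)\right)^{1/p_0}$$
for an explicit scalar function $\Phi$ homogeneous in $t$. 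Plugging this into the interpolation integral and swapping orders via Fubini, the inner integral in $t$ at each fixed $x$ becomes a scale-invariant beta-type integral that evaluates in closed form to a constant multiple of $|f(x)|^p$ times a power of $\omega_0(x)/\omega_1(x)$; matching exponents using $\tfrac1p=\tfrac{1-\theta}{p_0}+\tfrac{\theta}{p_1}$ then produces exactly the weight $\omega=\omega_0^{p(1-\theta)/p_0}\omega_1^{p\theta/p_1}$.

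I expect the main technical obstacle to be the explicit solution of the pointwise minimisation when $p_0\neq p_1$ and the extraction of a clean closed form for $\Phi$: the algebra is elementary but fiddly, and one must track both the upper bound (by exhibiting a concrete decomposition) and the lower bound (from a Lagrange-multiplier argument) carefully. A cleaner alternative, which I would fall back on if the direct computation becomes cumbersome, is to invoke the reiteration and power theorems of the real interpolation method together with the classical unweighted case $(L^{p_0}(d\mu),L^{p_1}(d\mu))_{\theta,p}=L^p(d\mu)$, thereby reducing the identification of $\omega$ to a scaling argument rather than a variational one.
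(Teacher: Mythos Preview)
The paper does not prove this statement; it is quoted from Bergh--L\"ofstr\"om \cite{BL2011} and invoked as a black box in the proof of Corollary~\ref{Cor:general_Paley_inequality}, so there is no in-paper argument to compare against.

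Your $K$-functional approach is the standard one and is essentially how the result is established in \cite{BL2011}, though there the weighted case is obtained by reducing to the unweighted identity $(L^{p_0},L^{p_1})_{\theta,p}=L^p$ after absorbing the weights into the measure --- this is precisely the ``cleaner alternative'' you mention at the end. One caution about your direct variational route: the expression you wrote for $K(t,f)$ as a single integral raised to the power $1/p_0$ is not right when $p_0\ne p_1$; the infimum does decouple pointwise, but the optimal value genuinely mixes the two exponents, and one has to work with a two-sided estimate (upper bound from an explicit splitting, lower bound from H\"older or convexity) rather than a single closed formula for $\Phi$. The fallback you describe is both cleaner and closer to the textbook argument.
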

From this we obtain:
\begin{cor}
\label{Cor:general_Paley_inequality}
Let $1<p\leq b \leq p'<\infty$.
If $\{\sigma(l)\}_{l\in\frac12\NN_0}$ satisfies condition \eqref{weak_symbol_estimate} with constant $K_{\sigma}$, then we have
\begin{multline}
\label{EQ:general_Paley_inequality}
\left(
\sum\limits_{l\in\frac12\NN_0}(2l+1)^{b(\frac2b-\frac12)}
\left(\|\widehat{f}(l)\|_{\HS} 
\|\sigma(l)\|^{\frac1b-\frac{1}{p'}}_{\op}
\right)^b
\right)^{\frac1b}
\\ \lesssim
\left(K_{\sigma}\right)^{\frac1b-\frac1{p'}}
\|f\|_{L^p(\rSU2)}.
\end{multline}
\end{cor}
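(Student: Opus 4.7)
The strategy is real interpolation between two endpoint estimates for the exponent $b$. At $b=p$ the claim \eqref{EQ:general_Paley_inequality} reduces exactly to Paley's inequality from Theorem \ref{THM:Paley_inequality} (after taking $p$-th roots), since $b(\tfrac1b-\tfrac1{p'})=1-\tfrac{p}{p'}=2-p$ and $K_\sigma^{1/b-1/p'}=K_\sigma^{(2-p)/p}$. At $b=p'$ the exponent $\tfrac1b-\tfrac1{p'}$ vanishes, so the $\|\sigma(l)\|_{\op}$ factor becomes trivial and the estimate collapses to the Hausdorff--Young inequality \eqref{H-Y}. The plan is therefore to produce the intermediate range $p<b<p'$ by interpolating these two endpoints.

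Viewing $\tfrac12\NN_0$ as a discrete measure space with counting measure and setting
\begin{align*}
\omega_0(l)&=(2l+1)^{2-p/2}\,\|\sigma(l)\|_{\op}^{2-p},\\
\omega_1(l)&=(2l+1)^{2-p'/2},
\end{align*}
the two endpoint bounds say that the subadditive map $T\colon f\mapsto(\|\widehat f(l)\|_{\HS})_{l\in\frac12\NN_0}$ sends $L^p(\rSU2)$ into $L^p(\omega_0)$ with norm $\lesssim K_\sigma^{(2-p)/p}$ and into $L^{p'}(\omega_1)$ with norm $\lesssim 1$. Real interpolation of the target couple then yields a bounded map into $(L^p(\omega_0),L^{p'}(\omega_1))_{\theta,b}$ with operator norm $\lesssim K_\sigma^{(2-p)(1-\theta)/p}$ for every $\theta\in(0,1)$, where $\tfrac1b=\tfrac{1-\theta}{p}+\tfrac\theta{p'}$, so that $b$ sweeps the whole interval $(p,p')$ as $\theta$ varies.

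Theorem \ref{THM:L_p-weighted-interpolation} identifies this interpolation space as $L^b(\omega)$ with $\omega=\omega_0^{b(1-\theta)/p}\omega_1^{b\theta/p'}$. The remaining step is a short algebraic verification that $\omega$ matches the weight on the left-hand side of \eqref{EQ:general_Paley_inequality}. The exponent of $(2l+1)$ equals $b\bigl[(\tfrac2p-\tfrac12)(1-\theta)+(\tfrac{2}{p'}-\tfrac12)\theta\bigr]=b(\tfrac2b-\tfrac12)$ by definition of $b$. The exponent of $\|\sigma(l)\|_{\op}$ equals $(2-p)\,b(1-\theta)/p=b(1-\theta)(\tfrac1p-\tfrac1{p'})=b(\tfrac1b-\tfrac1{p'})$, where the first equality uses the conjugacy relation $\tfrac1p+\tfrac1{p'}=1$ and the second the definition of $b$. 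The same identity rewrites $K_\sigma^{(2-p)(1-\theta)/p}=K_\sigma^{1/b-1/p'}$, giving the constant on the right-hand side.

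The only mildly subtle point is that $T$ is subadditive rather than linear, which is harmless for real interpolation between Banach lattices; alternatively the argument can be run with the genuinely linear Fourier transform $\FT$ valued in sequences of matrices equipped with the Hilbert--Schmidt fibre norm, and Theorem \ref{THM:L_p-weighted-interpolation} extends verbatim to this vector-valued setting since the fibre norm at each $l$ is a fixed Hilbert space norm.
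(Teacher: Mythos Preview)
Your proof is correct and follows essentially the same approach as the paper: interpolate between the Paley inequality \eqref{EQ:Paley_inequality} at $b=p$ and the Hausdorff--Young inequality \eqref{H-Y} at $b=p'$, invoking Theorem \ref{THM:L_p-weighted-interpolation} on the target weighted sequence spaces. The only cosmetic difference is normalisation: the paper works with the map $f\mapsto\{\widehat f(l)/\sqrt{2l+1}\}$ so that both endpoint weights carry a common factor $(2l+1)^2$, whereas you keep $f\mapsto(\|\widehat f(l)\|_{\HS})$ and absorb this factor into $\omega_0,\omega_1$; your explicit algebraic verification and your remark on sublinearity are welcome additions.
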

This reduces to \eqref{H-Y} when $b=p'$ and to \eqref{EQ:Paley_inequality} when $b=p$.
\begin{proof}
We consider a sub-linear operator $A$ which takes a function $f$ to its Fourier transform $\widehat{f}(l)$ divided by $\sqrt{2l+1}$
i.e.
	$$
	f \mapsto Af=:\left\{\frac{\widehat{f}(l)}{\sqrt{2l+1}}\right\}_{l\in\frac12\mathbb{N}_0},
	$$
	where
	$$
	\widehat{f}(l)=\int\limits_{\rSU2}f(u)t^l(u)^*\,u \in \mathbb{C}^{(2l+1)\times (2l+1)},\; l\in\frac12\mathbb{N}_0.
	$$

The statement follows from Theorem \ref{THM:L_p-weighted-interpolation} if we regard 
the left-hand sides of inequalities \eqref{EQ:Paley_inequality} and
\eqref{H-Y} as an $\|Af\|_{L^p}$-norm in a weighted sequence space over $\frac12\NN_0$ with the weights given by $w_0(l)=(2l+1)^2\|\sigma(l)\|^{2-p}_{op}$ and $w_1(l)=(2l+1)^2,\,l\in\frac12\NN_0$.
\end{proof}
\medskip


\medskip
Coming back to the Hardy--Littlewood Theorem \ref{THM:Hardy_Littlewood},
we see that the convergence of the series \eqref{H_L_condition} is a sufficient condition 
for $f$ to belong to $L^p(\TT)$, for $p\geq 2$. However, this condition is not necessary.
Hence, there arises the question of finding necessary conditions for $f$ to belong to 
$L^p$. In other words, there is the problem of finding lower estimates for $\|f\|_{L^p}$ 
in terms of the series of the form \eqref{H_L_condition}.  
Such result on $L^p(\mathbb{T})$ was obtained by Nursultanov and can be stated as follows.
\begin{thm}[\cite{NED}]\label{NED}
If $2<p<\infty$ and $f\in L^p(\mathbb{T})$, then we have
\begin{equation}
\label{necess_T}
	\sum^{\infty}_{k=1}k^{p-2}\left(\sup_{\substack{e\in M \\ |e|\geq k}}
	\frac1{|e|}\left|\sum_{m\in e}\widehat{f}(m)\right|\right)^p\leq C \|f\|^p_{L^p(\TT)},
\end{equation}
where $M$ is the set of all finite arithmetic progressions in $\mathbb Z$.
\end{thm}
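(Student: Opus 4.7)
My plan is to recast \eqref{necess_T} as the $L^p\to\ell^{p',p}$ boundedness of a sub-linear operator and prove it by real interpolation between two weak-type estimates. Set
$$a_k:=\sup_{\substack{e\in M\\|e|\geq k}}\frac{1}{|e|}\left|\sum_{m\in e}\widehat{f}(m)\right|,\qquad k\in\NN.$$
Since $(a_k)$ is non-increasing, one has the Lorentz-space identification
$$\sum_{k=1}^{\infty}k^{p-2}a_k^{p}=\sum_{k=1}^{\infty}k^{p/p'-1}a_k^{p}\cong\|(a_k)\|_{\ell^{p',p}}^{p},$$
with $p'=p/(p-1)$, so the theorem reduces to the boundedness of the sub-linear operator $T\colon f\mapsto(a_k)_{k\in\NN}$ from $L^p(\TT)$ into the Lorentz sequence space $\ell^{p',p}$.

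The first substantive step will be to establish, for every $1<r<\infty$, the weak-type bound $T\colon L^r(\TT)\to\ell^{r',\infty}$. For an arithmetic progression $e=\{a+jd:0\leq j<N\}$ one has
$$\frac{1}{|e|}\sum_{m\in e}\widehat{f}(m)=\int_{\TT}f(x)\psi_e(x)\,dx,\qquad \psi_e(x)=\frac{1}{2\pi N}\sum_{j=0}^{N-1}e^{-i(a+jd)x},$$
and $2\pi N\psi_e(x)=e^{-iax-i(N-1)dx/2}\sin(Ndx/2)/\sin(dx/2)$. The substitution $y=dx$ shows that $\int_{\TT}|\psi_e|^s$ is independent of $a$ and $d$ and reduces to a normalised $L^s$-norm of a one-sided Dirichlet kernel, which yields $\|\psi_e\|_{L^s(\TT)}\lesssim N^{-1/s}$ for every $1<s\leq\infty$. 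H\"older's inequality then gives $\frac{1}{|e|}\bigl|\sum_{m\in e}\widehat{f}(m)\bigr|\lesssim\|f\|_{L^r(\TT)}\,|e|^{-1/r'}$, and since the upper bound is maximised at $|e|=k$, taking the supremum produces $a_k\lesssim\|f\|_{L^r(\TT)}\,k^{-1/r'}$, i.e.\ the weak-type bound $T\colon L^r(\TT)\to\ell^{r',\infty}$.

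To finish, I would fix $2<p<\infty$, choose $1<r_0<p<r_1<\infty$, and apply the Marcinkiewicz--Hunt interpolation theorem in Lorentz form to the pair $T\colon L^{r_i}\to\ell^{r_i',\infty}$, $i=0,1$. Because $1/r_i+1/r_i'=1$ at both endpoints, the interpolated exponents satisfy $q_\theta=p_\theta'$ along the entire interpolation line; choosing $\theta$ so that $p_\theta=p$ and taking secondary Lorentz parameter $p$ produces $T\colon L^p(\TT)=L^{p,p}(\TT)\to\ell^{p',p}$, which is precisely \eqref{necess_T}. The main obstacle is the spacing-independent estimate $\|\psi_e\|_{L^s(\TT)}\lesssim|e|^{-1/s}$, which uses crucially that $e$ is an arithmetic progression, so that the associated trigonometric polynomial collapses to a shifted Dirichlet-type kernel; no comparable bound is available for general finite subsets of $\ZZ$. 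A secondary technical point will be to invoke the correct form of Marcinkiewicz--Hunt so that two weak-type inputs on the dual line produce the genuine Lorentz estimate $\ell^{p',p}$, rather than merely the weak estimate $\ell^{p',\infty}$.
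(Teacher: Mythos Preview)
The paper does not prove Theorem~\ref{NED}; it is quoted from Nursultanov's paper \cite{NED} and used as a black box in the proof of Theorem~\ref{Akylzhanov_1}. So there is no proof in the present paper to compare your proposal against.

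That said, your argument is sound and self-contained. The key estimate $\|\psi_e\|_{L^s(\TT)}\lesssim |e|^{-1/s}$ is correct: the substitution $y=dx$ tiles $[0,2\pi d)$ by $d$ copies of a period of the Dirichlet-type kernel, and the $L^s$ norm of the latter is $\cong N^{1-1/s}$ for $1<s<\infty$, so after dividing by $N$ you get the claimed bound, uniformly in the common difference $d$ and the initial term $a$. This yields the weak-type bounds $T\colon L^r(\TT)\to\ell^{r',\infty}$ for all $1<r<\infty$. Your worry about the interpolation step is unfounded: the restriction $p_i\leq q_i$ is needed for the \emph{classical} Marcinkiewicz theorem to produce a strong $L^p\to L^q$ bound, but here you are using real interpolation of Lorentz spaces, and the identity $(\ell^{r_0',\infty},\ell^{r_1',\infty})_{\theta,p}=\ell^{p',p}$ holds as soon as $r_0'\neq r_1'$, with no ordering hypothesis (see e.g.\ \cite{BL2011}). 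Since $1/r_i+1/r_i'=1$ forces the interpolated target exponent to be $p'$ exactly when the source exponent is $p$, you land precisely on $T\colon L^p\to\ell^{p',p}$, which is \eqref{necess_T}.
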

We now present a (noncommutative) version of this result on the group $\rSU2$.

\begin{thm}\label{Akylzhanov_1} If $2< p <\infty$ and $f\in L^p(\rSU2)$, then we have
\begin{equation}
\label{THM:necess_SU2}
	\sum_{l\in\frac12\mathbb{N}_0}(2l+1)^{p-2}\left(\sup_{\substack{k\in\frac12\mathbb{N}_0 \\ k\geq l}}\frac1{2k+1}\left|\Tr\widehat{f}(k)\right|\right)^p
\leq 
c\|f\|^p_{L^p(\rSU2)}
.
\end{equation}
\end{thm}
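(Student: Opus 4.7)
My plan is to reduce \eqref{THM:necess_SU2} to the corresponding scalar torus result, Theorem \ref{NED}, by exploiting that the normalized trace $\frac{1}{2l+1}\Tr\widehat f(l)$ depends only on the central part of $f$.

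First I would pass from $f$ to its class-function average $\tilde f(u)=\int_{\rSU2}f(vuv^{-1})\,dv$. Jensen's inequality in $dv$ gives $\|\tilde f\|_{L^p(\rSU2)}\leq\|f\|_{L^p(\rSU2)}$. A Fubini computation combined with Schur's lemma applied to the $G$-invariant averaging $\int_{\rSU2}t^l(v)^{-1}A\,t^l(v)\,dv=\frac{\Tr A}{2l+1}I_{2l+1}$ yields $\widehat{\tilde f}(l)=\frac{\Tr\widehat f(l)}{2l+1}I_{2l+1}$, so $\Tr\widehat{\tilde f}(l)=\Tr\widehat f(l)$. Both sides of \eqref{THM:necess_SU2} are therefore unchanged under $f\mapsto\tilde f$, so I may assume $f$ is a class function.

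For a class function, the Weyl integration formula on $\rSU2$ gives
\[
\|f\|_{L^p(\rSU2)}^p=\tfrac{2}{\pi}\int_0^\pi|f(\theta)|^p\sin^2\theta\,d\theta,\qquad\chi_l(\theta)=\frac{\sin((2l+1)\theta)}{\sin\theta}.
\]
Let $F(\theta)=f(\theta)\sin\theta$, extended to an odd $2\pi$-periodic function on $\TT$, with sine Fourier coefficients $\{B_n\}_{n\geq1}$. Then
\[
\Tr\widehat f(l)=\tfrac{2}{\pi}\int_0^\pi F(\theta)\sin((2l+1)\theta)\,d\theta=B_{2l+1},
\]
and $\sin^p\theta\leq\sin^2\theta$ for $p\geq 2$ yields the norm comparison $\|F\|_{L^p(\TT)}^p\leq\tfrac12\|f\|_{L^p(\rSU2)}^p$.

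It then remains to extract the quantity $|B_m|/m$ from the supremum in Theorem \ref{NED}. For each $m\geq1$ I choose the symmetric arithmetic progression $e_m=\{-m+1,\ldots,m\}\subset\ZZ$ of length $2m$. Oddness of $F$ gives $\widehat F(-j)=-\widehat F(j)$ and $\widehat F(0)=0$, so the terms in $\sum_{j\in e_m}\widehat F(j)$ cancel pairwise except $\widehat F(m)=B_m/(2i)$, giving $\frac{1}{|e_m|}\bigl|\sum_{j\in e_m}\widehat F(j)\bigr|=|B_m|/(4m)$. Setting $n=2l+1$ and noting $|e_m|\geq n$ iff $m\geq\lceil n/2\rceil$, I obtain
\[
\sup_{m\geq n}\frac{|B_m|}{m}\;\leq\;\sup_{m\geq\lceil n/2\rceil}\frac{|B_m|}{m}\;\leq\;4\sup_{\substack{e\in M\\ |e|\geq n}}\frac{1}{|e|}\Bigl|\sum_{j\in e}\widehat F(j)\Bigr|.
\]
Raising to the $p$-th power, multiplying by $n^{p-2}$, summing over $n\geq1$, and using Theorem \ref{NED} together with the norm comparison above proves \eqref{THM:necess_SU2}: under the change $n=2l+1$, the left-hand side of \eqref{THM:necess_SU2} is precisely $\sum_{n\geq1}n^{p-2}\bigl(\sup_{m\geq n}|B_m|/m\bigr)^p$.

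I expect the Schur-lemma identity for $\widehat{\tilde f}(l)$ to be the main step: it is the sole place where the noncommutativity of $\rSU2$ genuinely matters, and it collapses the matrix-valued Fourier transform to a single scalar per $l$, which is what makes the reduction to the abelian torus result possible. The remaining work on the $\TT$ side rests on the carefully chosen symmetric progression $e_m$, whose odd symmetry produces exactly the weight $1/m$ matching the normalization $1/(2l+1)$ in \eqref{THM:necess_SU2}.
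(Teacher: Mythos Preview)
Your argument is correct. Both your proof and the paper's reduce to Theorem~\ref{NED} on $\TT$ via the Weyl integration formula, but the packaging differs. The paper does not first project onto class functions; it parametrises $\rSU2\cong\mathbb S^3$ by coordinates $(t,v,h)$, integrates out the fibre variables $(v,h)$ to define a scalar function $F$ on the torus, and then uses the character expansion $\chi_k(t)=\sum_{n=-k}^k e^{int}$ to identify $\Tr\widehat f(k)$ directly as a sum of Fourier coefficients of $F$ over the arithmetic progression $A_k=\{-2k,-2k+2,\ldots,2k\}$ of length $2k+1$. Theorem~\ref{NED} then applies immediately with $e=A_k$. Your route first makes the class-function reduction explicit via Schur's lemma (which is conceptually cleaner and explains why only the trace matters), and then, because you absorb the Weyl denominator $\sin\theta$ into $F$, you obtain $\Tr\widehat f(l)$ as a \emph{single} sine coefficient $B_{2l+1}$ rather than a sum. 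The price is that you must then manufacture an arithmetic progression yourself to feed into Theorem~\ref{NED}, which you do with the symmetric block $e_m=\{-m+1,\ldots,m\}$ and the odd-cancellation trick. Both approaches give the same $\|F\|_{L^p}\lesssim\|f\|_{L^p(\rSU2)}$ bound (the paper via H\"older over the fibre, you via $\sin^p\theta\le\sin^2\theta$). Your version is perhaps more transparent about the role of centrality; the paper's is slightly more direct in matching the structure of Theorem~\ref{NED}.
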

For completeness, we give a simple argument for Corollary \ref{COR:duality}.

\begin{proof}[Proof of Corollary \ref{COR:duality}]
The application of the duality of $L^p$ spaces yields
$$
\|f\|_{L^p(\rSU2)}
=
\sup_{\substack{g\in L^{p'}\\\|g\|_{L^{p^{\prime}}=1}}}
\left|
\int\limits_{\rSU2}f(x)\overline{g(x)}\,dx
\right|.
$$
Using Plancherel's identity \eqref{EQ:plancherel}, we get
$$
\int\limits_{\rSU2}f(x)\overline{g(x)}\,dx
=
\sum_{l\in\frac12\mathbb{N}_0}(2l+1)\Tr \widehat{f}(l)\widehat{g}(l)^*.
$$
It is easy to see that
\begin{eqnarray*}
(2l+1)=(2l+1)^{\frac52-\frac{4}{p}+\frac52-\frac{4}{p'}},\\
\left|\Tr \widehat{f}(l)\widehat{g}(l)^*\right|
\leq
\|\widehat{f}(l)\|_{\HS}
\|\widehat{g}(l)\|_{\HS}.
\end{eqnarray*}
Using these inequalities, applying H\"older inequality, 
for any $g\in L^{p'}$ with $\|g\|_{L^{p^{\prime}}}=1$, we have
\begin{multline*}
\left|
\sum_{l\in\frac12\mathbb{N}_0}(2l+1)\Tr {\widehat{f}(l)\widehat{g}(l)^*}
\right|
\leq
\sum_{l\in\frac12\mathbb{N}_0}(2l+1)^{\frac52-\frac{4}{p}}\|\widehat{f}(l)\|_{\HS}
(2l+1)^{\frac52-\frac{4}{p'}}\|\widehat{g}(l)\|_{\HS} \\
\leq
	\left(\sum_{l\in\frac12\mathbb{N}_0}(2l+1)^{\frac52 p-4}\|\widehat{f}(l)\|^p_{\HS}\right)^{\frac1p}
	\left(\sum_{l\in\frac12\mathbb{N}_0}(2l+1)^{\frac52 p'-4}\|\widehat{g}(l)\|^{p'}_{\HS}\right)^{\frac1{p'}}
\\
\leq
\left(\sum_{l\in\frac12\mathbb{N}_0}(2l+1)^{\frac52 p-4}\|\widehat{f}(l)\|^p_{\HS}\right)^{\frac1p} \|g\|_{L^{p^{\prime}}},
\end{multline*}
where we used Theorem \ref{Akylzhanov_2} in the last line.
Thus, we have just proved that
\begin{multline*}
\left|
\int\limits_{\rSU2}
f(x)\overline{g(x)}\,dx
\right|
\leq
\left|
\sum_{l\in\frac12\mathbb{N}_0}
(2l+1)\Tr \widehat{f}(l)\widehat{g}(l)^*
\right|
\\\leq
\left(\sum_{l\in\frac12\mathbb{N}_0}
(2l+1)^{\frac52 p-4}
\|\widehat{f}(l)\|^p_{\HS}
\right)^{\frac1p} 
\|g\|_{L^{p^{\prime}}}.
\end{multline*}
Taking supremum over all $g\in L^{p'}(\rSU2)$, we get \eqref{duality}.
This proves Corollary \ref{COR:duality}.
\end{proof}

\section{Lower bounds for Fourier multipliers on \texorpdfstring{$\rSU2$}{SU(2)}}
\label{main_results}

%

Let $A\colon C^{\infty}(\rSU2)\to \mathcal{D}'(\rSU2)$ be a continuous linear operator. 
Here we are concerned with left-invariant operators which means that 
$A\circ \tau_g=\tau_g\circ A$ for the left-translation 
$\tau_g f(x)=f(g^{-1} x)$. Using the Schwartz kernel theorem and the 
Fourier inversion formula one can prove that the left-invariant continuous operator 
$A$ can be written as a Fourier multiplier, namely, as
$$
 \widehat{Af}(l)=\sigma_{A}(l) \widehat{f}(l),
$$
for the symbol $\sigma_A(l)\in {\mathbb C}^{(2l+1)\times (2l+1)}$.
It follows from the Fourier inversion formula that we can write this also as
\begin{equation}\label{EQ:Ainv}
	A f(u)=\sum_{l\in\frac12\mathbb{N}_0}(2l+1)\Tr t^l(u)\sigma_A(l)\widehat{f}(l),
\end{equation}
where the symbol $\sigma_A(l)$ is given by 
$$
	\sigma_A(l)=t^l(e)^*At^l(e)=At^{l}(e),
$$
where $e$ is an identity matrix in $\rSU2$, and $(At^l)_{mk}=A(t^{l}_{mk})$ is defined
component-wise, for $-l\leq m,n\leq l$.
We refer to operators in these equivalent forms as (noncommutative) Fourier multipliers. 
The class of these operators on $\rSU2$ and their $L^{p}$-boundedness was
investigated in \cite{Coifman-Weiss:SU2-Argentina-1970,coifman+weiss_lnm},
and on general compact Lie groups in \cite{RuWi2013}. 
In particular, these authors proved H\"ormander--Mikhlin type multiplier theorems
in those settings, giving sufficient condition for the $L^p$-boundedness in terms of symbols. 
These conditions guarantee that the operator is of weak (1,1)-type which,
combined with a simple $L^{2}$-boundedness statement, implies the boundedness
on $L^{p}$ for all $1<p<\infty$. 

For a general (non-invariant) operator $A$, its matrix symbol $\sigma_A(u,l)$
will also depend on $u$. Such quantization \eqref{EQ:Ainv} has been consistently developed in
\cite{RT} and \cite{Ruzhansky+Turunen-IMRN}. We note that 
the $L^p$-boundedness results in \cite{RuWi2013} also cover such non-invariant
operators.

For a noncommutative Fourier multiplier $A$ we will write  
$A\in\mathnormal{M}^{q}_p(\rSU2)$ if $A$ extends to a bounded operator from $L^p(\rSU2)$ to 
$L^q(\rSU2)$. We introduce a norm $\|\cdot\|$ on $\mathnormal{M}^{q}_p(\rSU2)$ by setting
$$
	\|A\|_{\mathnormal{M}^{q}_p}:=\|A\|_{L^p\to L^q}.
$$
Thus, we are concerned with the question of what assumptions on the symbol $\sigma_{A}$
guarantee that $A\in\mathnormal{M}^{q}_{p}$. The sufficient conditions on 
$\sigma_A$ for $A\in\mathnormal{M}^p_p $ were investigated in \cite{RuWi2013}.
The aim of this section is to give a necessary condition on $\sigma_{A}$ for 
$A \in \mathnormal{M}^{q}_{p}$, for $1<p\leq 2 \leq q <\infty$. 

Suppose that $1<p\leq 2 \leq q<\infty$ and that 
$A\colon L^p(\rSU2)\to L^q(\rSU2)$ is a Fourier multiplier.    
The Plancherel identity \eqref{EQ:plancherel} implies that the operator $A$ is bounded from
$L^2(\rSU2)$ to $L^2(\rSU2)$ if and only if 
$\sup_{l}\|\sigma_A(l)\|_{op}<\infty$. 
Different other function spaces on the unitary dual have been discussed in \cite{RT}. 
Following Stein, we search for more subtle conditions on the symbols of 
noncommutative Fourier multipliers ensuring  their $L^{p}-L^{q}$ boundedness, and
we now prove a lower estimate which depends explicitly on $p$ and $q$. 

\begin{thm} \label{THM:lower-bound}
Let $1<p\leq 2 \leq q <\infty$ and let $A$ be a left-invariant operator on
$SU(2)$ such that $A\in \mathnormal{M}^q_p(\rSU2)$.  Then we have
\begin{eqnarray}
\label{SU_conds_1}
\sup_{l\in\frac12\mathbb{N}_0}
\frac{\min\limits_{n\in\{-l,\ldots,+l\}}|\sigma_A(l)_{nn}|}
{(2l+1)^{\frac1{p'}+\frac1{q}}}
\lesssim
\|A\|_{L^p(\rSU2)\to L^q(\rSU2)},
\\
\label{SU_conds_2}
\sup_{l\in\frac12\mathbb{N}_0}
\frac{|\Tr \sigma_A(l)|}
{(2l+1)^{1+\frac1{p'}+\frac1{q}}}
\lesssim
\|A\|_{L^p(\rSU2)\to L^q(\rSU2)}.
\end{eqnarray}
\end{thm}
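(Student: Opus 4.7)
The plan is to prove both bounds by testing $A$ on carefully chosen functions whose Fourier transform is supported at a single representation $l$, and then exploiting the inclusions $L^p\hookrightarrow L^2\hookrightarrow L^q$ for $1<p\leq 2\leq q<\infty$ that hold on $\rSU2$ (since its Haar measure is a probability measure).

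For \eqref{SU_conds_1}, fix $l\in\frac12\NN_0$, let $n_0$ attain $\min_n|\sigma_A(l)_{nn}|$, and test with $f=t^l_{n_0n_0}$. The orthogonality relations for matrix coefficients give $\widehat{f}(k)=0$ for $k\neq l$, and $\widehat{f}(l)$ is a matrix whose only nonzero entry equals $1/(2l+1)$ at position $(n_0,n_0)$. Substituting into \eqref{EQ:Ainv} yields
\[
Af(u)=\sum_{j}\sigma_A(l)_{jn_0}\,t^l_{n_0j}(u).
\]
Since $q\geq 2$, $\|Af\|_{L^q}\geq\|Af\|_{L^2}$, and Plancherel gives
\[
\|Af\|_{L^2}^2=\tfrac{1}{2l+1}\sum_{j}|\sigma_A(l)_{jn_0}|^2\geq\frac{|\sigma_A(l)_{n_0n_0}|^2}{2l+1}.
\]
On the other hand, $\|f\|_{L^p}\leq\|f\|_{L^2}=(2l+1)^{-1/2}$ for $p\leq 2$. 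Taking the ratio yields $|\sigma_A(l)_{n_0n_0}|\leq\|A\|_{L^p\to L^q}$, which immediately implies \eqref{SU_conds_1} since $(2l+1)^{1/p'+1/q}\geq 1$.

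For \eqref{SU_conds_2}, test with the character $f=\chi_l=\Tr t^l$. Orthogonality gives $\widehat{\chi_l}(k)=\tfrac{1}{2l+1}I_{2l+1}\delta_{kl}$, so \eqref{EQ:Ainv} yields $A\chi_l(u)=\Tr(t^l(u)\sigma_A(l))$. By Plancherel, $\|A\chi_l\|_{L^2}=\|\sigma_A(l)\|_{\HS}/\sqrt{2l+1}$, and Cauchy--Schwarz on the matrix level gives $|\Tr\sigma_A(l)|\leq\sqrt{2l+1}\,\|\sigma_A(l)\|_{\HS}$. Combining this with $\|A\chi_l\|_{L^q}\geq\|A\chi_l\|_{L^2}$ and $\|\chi_l\|_{L^p}\leq\|\chi_l\|_{L^2}=1$ for $p\leq 2$ produces $|\Tr\sigma_A(l)|/(2l+1)\leq\|A\|_{L^p\to L^q}$; dividing by $(2l+1)^{1/p'+1/q}\geq 1$ yields \eqref{SU_conds_2}.

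The only real obstacle is the bookkeeping: correctly reading off $\widehat{t^l_{mn}}$ and $\widehat{\chi_l}$ from the convention $\widehat{f}(l)_{ij}=\int f\,\overline{t^l_{ji}}$, and then unwinding the trace in \eqref{EQ:Ainv} without confusing row and column indices. Once this is settled, the remaining estimates are a short chain of Plancherel, the probability-measure inclusions $L^p\subset L^2\subset L^q$, and Cauchy--Schwarz, and the $p,q$-dependence enters only through the cosmetic factor $(2l+1)^{1/p'+1/q}\geq 1$.
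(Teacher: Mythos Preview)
Your proof is correct, but it follows a quite different and more elementary path than the paper's.

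The paper proves \eqref{SU_conds_1} by estimating $\|t^{l}_{nn}\|_{L^p}\cong(2l+1)^{-1/p}$ via the Giulini--Travaglini result, and bounds $\|Af_{l_0}\|_{L^q}$ from below by invoking the noncommutative Hardy--Littlewood inequality of Theorem~\ref{Akylzhanov_1}; for \eqref{SU_conds_2} it estimates $\|\chi_{l_0}\|_{L^p}$ through the Dirichlet kernel and again applies Theorem~\ref{Akylzhanov_1}. In both cases the exponent $\frac1{p'}+\frac1q$ genuinely arises from combining a sharp $L^p$-estimate of the test function with a sharp $L^q$-lower bound on its image. Your argument bypasses all of this: you only use Plancherel together with the probability-measure embeddings $\|g\|_{L^2}\leq\|g\|_{L^q}$ and $\|f\|_{L^p}\leq\|f\|_{L^2}$, and Cauchy--Schwarz for the trace. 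This actually yields the \emph{stronger} uniform bounds
\[
\min_{n}|\sigma_A(l)_{nn}|\leq\|A\|_{L^p\to L^q},
\qquad
\frac{|\Tr\sigma_A(l)|}{2l+1}\leq\|A\|_{L^p\to L^q},
\]
from which \eqref{SU_conds_1} and \eqref{SU_conds_2} follow by dividing by $(2l+1)^{1/p'+1/q}\geq 1$.

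What each approach buys: yours is considerably shorter and shows that the stated lower bounds are in fact weaker than a $(p,q)$-independent inequality; the paper's route, while heavier, illustrates how Theorem~\ref{Akylzhanov_1} is used in practice and would remain applicable in settings (e.g.\ non-compact groups or weighted situations) where the crude inclusions $L^q\hookrightarrow L^2\hookrightarrow L^p$ fail. You should be explicit that the $p,q$-dependence in your final inequalities is cosmetic, since this is a substantive observation about the statement itself.
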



One can see a similarity between \eqref{SU_conds_1}, \eqref{SU_conds_2} and 
\eqref{SU_conds-3} as 
\begin{equation}
\label{SU_conds-2}
\sup_{l\in\frac12\mathbb{N}_0}
\frac{1}{(2l+1)^{\frac1{p'}+\frac1{q}}}
\frac{1}{2l+1} 
{\left|\Tr \sigma_A(l)\right|}
\lesssim
\|A\|_{L^p(\rSU2)\to L^q(\rSU2)}.
\end{equation}
We also note that estimates \eqref{SU_conds_1} and \eqref{SU_conds_2} can not be
immediately compared because the value of the trace in \eqref{SU_conds_2}
depends on the signs of the diagonal entries of $\sigma_A(l)$.

\begin{proof}[Proof of Theorem \ref{THM:lower-bound}]
In \cite{Travaglini1980} it was proven that for any $l\in \frac12\mathbb{N}_0$ there exists a basis
for $t^l\in \SU2$ and a diagonal matrix coefficient $t^l_{nn}$ 
(i.e. for some $n$, $-l\leq n\leq l$),
such that
\begin{equation}
\label{repr_elem_estim}
	\|t^l_{nn}\|_{L^p(\rSU2)}\cong \frac1{(2l+1)^{\frac1p}}.
\end{equation}
Now, we use this result to establish a lower bound for the norm of $A\in M^q_p(\rSU2)$.
Let us fix an arbitrary $l_{0}\in\frac12\NN_{0}$ and the corresponding diagonal element $t^{l_0}_{nn}$. 
We consider $f_{l_0}(g)$ such that its matrix-valued Fourier coefficient
\begin{equation}
\label{choice_of_f}
	\widehat{f_{l_0}}(l)
=\diag(0,\ldots, 1,0,\ldots)\delta^{l}_{l_0}
\end{equation}
has only one non-zero diagonal coefficient $1$ at the $n^{th}$ diagonal entry.
Then by the Fourier inversion formula we get
$ f_{l_0}(g)=(2l_0+1) t^{l_0}_{nn}(g).$
 By definition, we get
\begin{multline*}
\|A\|_{L^p\to L^q} = \sup_{f\neq 0}
\frac{\|\sum_{l\in\frac12\mathbb{N}_0}(2l+1)
\Tr t^l(u)\sigma_A(l)\widehat{f}(l) \|_{L^q(\rSU2)}}
{\|f\|_{L^p(\rSU2)}}\\
\geq
\frac{\|\sum_{l\in\frac12\mathbb{N}_0}(2l+1)
\Tr t^l(u) \sigma_A(l)\widehat{f_{l_0}}(l)\|_{L^q(\rSU2)}}
{\|f_{l_0}\|_{L^p(\rSU2)}}.
\end{multline*}
Recalling \eqref{choice_of_f}, we get
$$
\|A\|_{L^p\to L^q}
\gtrsim
\frac{\|(2l_0+1)\Tr{ t^{l_0}(g) \sigma_A(l_0)\widehat{f_{l_0}}(l)}\|_{L^q(\rSU2)}}{\|f_{l_0}\|_{L^p(\rSU2)}}.
$$
Setting $h(g):=(2l_0+1)\Tr{ t^{l_0}(g) \sigma_A(l_0)\widehat{f_{l_0}}(l_0)}$, we have 
$\widehat{h}(l)=0$ for $l\not=l_{0}$, and 
$\widehat{h}(l_{0})=\sigma_A(l_0)\widehat{f_{l_0}}(l_0)$.
Consequently, we get 
$$
\sup_{\substack{k\in\frac12\mathbb{N}_0\\k\geq l}}\frac1{2k+1}\left|\Tr \widehat{h}(k)\right|
=
\begin{cases}
0,& l > l_0,\\
\frac1{2l_0+1}\left|\sigma_A(l_0)_{nn}\right|,& 1\leq l\leq l_0.
\end{cases}
$$
Using this, Theorem \ref{Akylzhanov_1} and \eqref{repr_elem_estim}, we have
\[
\|A\|_{L^p\to L^q}
\gtrsim
\frac{\left(\displaystyle\sum^{l_0}_{l=1}(2l+1)^{q-2}\left(\frac1{2l_0+1}\left|\sigma_A(l_0)_{nn}\right|\right)^q\right)^{\frac1q}}
{(2l_0+1)^{1-\frac1p}},
\]
where $l_0$ is an arbitrary fixed half-integer. Direct calculation now shows that
\begin{align*}
	\frac{\left(\displaystyle\sum^{l_0}_{l=1}(2l+1)^{q-2}\left(\frac1{2l_0+1}\left|\sigma_A(l_0)_{nn}\right|\right)^q\right)^{\frac1q}}
{(2l_0+1)^{1-\frac1p}}
=
\frac1{2l_0+1}\left|\sigma_A(l_0)_{nn}\right|
\frac{\left(\displaystyle\sum^{l_0}_{l=1}(2l+1)^{q-2}\right)^{\frac1q}}
{(2l_0+1)^{1-\frac1p}}
\\
=
\frac1{2l_0+1}\left|\sigma_A(l_0)_{nn}\right|
\frac{(2l_0+1)^{1-\frac1q}}
{(2l_0+1)^{1-\frac1p}}
\cong
\frac{\left|\sigma_A(l_0)_{nn}\right|}
{(2l_0+1)^{\frac1{p'}+\frac1{q}}}.
\end{align*}
Taking infimum over all  $n\in\{-l_0,-l_0+1,\ldots,l_0-1,l_0\}$ and then supremum over all half-integers, we have
$$
\|A\|_{L^p\to L^q}
\gtrsim
\sup_{l\in\frac12\mathbb{N}_0}
\frac{\min\limits_{n\in\{-l,\ldots,+l\}}|\sigma_A(l)_{nn}|}
{(2l+1)^{\frac1{p'}+\frac1{q}}}.
$$
This proves estimate \eqref{SU_conds_1}.
Now, we will prove estimate \eqref{SU_conds_2}.
Let us fix some $l_{0}\in\frac12\NN_{0}$ and consider now $f_{l_0}(u):=(2l_0+1) \chi_{l_0}(u)$,
where $\chi_{l_0}(u)=\Tr t^{l_{0}}(u)$ is the character of the representation $t^{l_{0}}$.
Then, in particular, we have 
\begin{equation}\label{EQ:char1}
\widehat{f_{l_0}}(l)=
\begin{cases}
I_{2l+1}, & l=l_0,\\
0,& l \neq l_0,\\
\end{cases}
\end{equation}
 where $I_{2l+1}\in {\mathbb C}^{(2l+1)\times (2l+1)}$ is the identity matrix. 
 Using the Weyl character formula, we can write
 $$
\chi_{l_0}(u) = \sum^{l_0}_{k=-l_0}e^{ikt},
$$
where
$
u=v^{-1}\begin{pmatrix}e^{it} & 0 \\ 0 & e^{-it}\end{pmatrix}v.
$
The value of $\chi_{l_0}(u)$ does not depend on $v$ since characters are central.
Further,
the application of the Weyl integral formula yields
$$
\|f_{l_0}\|_{L^p(SU(2))}= (2l_0+1)\|\chi_{l_0}\|_{L^p(SU(2))}=
(2l_0+1)
\left(\int\limits^{2\pi}_0 \left|\sum^{l_0}_{k=-l_0}e^{ikt}\right|^p2\sin^2 t\,
\frac{dt}{2\pi}\right)^{\frac1p}.
$$ 	
It is clear that
$\left|e^{i(-l_0-1)t}\sum^{l_0}_{k=-l_0}e^{i(k+l_0+1)t} \right|=\left|\sum^{2l_0+1}_{k=1}e^{ikt}\right|$.
We call $D_{2l_0+1}(t):=\sum^{2l_0+1}_{k=1}e^{ikt}$ the Dirichlet kernel. 
Then, we apply 
\cite[Corollary 4]{NED}
to the Dirichlet kernel $D_{2l_0+1}(t)$, to get
\begin{equation}
\|\chi_{l_0}\|_{L^p(\rSU2)}
\lesssim
\|D_{2l_0+1}\|_{L^p(0,2\pi)}
\cong
(2l_0+1)^{1-\frac1{p}}.
\end{equation}
By definition, we get
\begin{multline*}
\|A\|_{L^p\to L^q} = \sup_{f\neq 0}
\frac{\|\sum_{l\in\frac12\mathbb{N}_0}(2l+1)
\Tr{ t^l(u)\sigma_A(l)\widehat{f}(l)} \|_{L^q(\rSU2)}}{\|f\|_{L^p(\rSU2)}}\\
\geq
\frac{\|\sum_{l\in\frac12\mathbb{N}_0}(2l+1)
\Tr{ t^l(u) \sigma_A(l)\widehat{f_{l_0}}(l) }\|_{L^q(\rSU2)}}{\|f_{l_0}\|_{L^p(\rSU2)}}.
\end{multline*}
Recalling \eqref{EQ:char1}, we obtain
$$
\|A\|_{L^p\to L^q}
\gtrsim
\frac{\|(2l_0+1)\Tr{ t^{l_0}(g) \sigma_A(l_0)}\|_{L^q(\rSU2)}}{\|f_{l_0}\|_{L^p(\rSU2)}}.
$$
Setting $h(g):=(2l_0+1)\Tr{ t^{l_0}(g) \sigma_A(l_0)}$, we have 
$\widehat{h}(l)=0$ for $l\not=l_{0}$, and 
$\widehat{h}(l_{0})=\sigma_A(l_0)$.
Consequently, we get 
$$
\sup_{\substack{k\in\frac12\mathbb{N}_0\\k\geq l}}\frac1{2k+1}\left|\Tr \widehat{h}(k)\right|
=
\begin{cases}
0,& l > l_0,\\
\frac1{2l_0+1}\left|\Tr \sigma_A(l_0)\right|,& 1\leq l\leq l_0.
\end{cases}
$$
Using this and Theorem \ref{Akylzhanov_1}, we have
\[
\|A\|_{L^p\to L^q}
\gtrsim
\frac{\left(\displaystyle\sum^{l_0}_{l=1}(2l+1)^{q-2}\left(\frac1{2l_0+1}\left|\Tr \sigma_A(l_0)\right|\right)^q\right)^{\frac1q}}
{(2l_0+1)(2l_0+1)^{1-\frac1p}},
\]
where $l_0$ is an arbitrary fixed half-integer. Direct calculation shows that
\begin{align*}
	\frac{\left(\displaystyle\sum^{l_0}_{l=1}(2l+1)^{q-2}\left(\frac1{2l_0+1}\left|\Tr \sigma_A(l_0)\right|\right)^q\right)^{\frac1q}}
{(2l_0+1)(2l_0+1)^{1-\frac1p}}
=
\frac1{2l_0+1}\left|\Tr \sigma_A(l_0)\right| 
\frac{\left(\displaystyle\sum^{l_0}_{l=1}(2l+1)^{q-2}\right)^{\frac1q}}
{(2l_0+1)(2l_0+1)^{1-\frac1p}}
\\
=
\frac1{2l_0+1}\left|\Tr \sigma_A(l_0)\right| 
\frac{(2l_0+1)^{1-\frac1q}}
{(2l_0+1)(2l_0+1)^{1-\frac1p}}
\cong
\frac{\left|\Tr \sigma_A(l_0)\right|}
{(2l_0+1)^{1+\frac1{p'}+\frac1{q}}}.
\end{align*}
Taking supremum over all half-integers, we have
$$
\|A\|_{L^p\to L^q}
\gtrsim
\sup_{l\in\frac12\mathbb{N}_0}
\frac{\left|\Tr \sigma_A(l)\right|}
{(2l+1)^{1+\frac1{p'}+\frac1{q}}}.
$$
This proves the estimate \eqref{SU_conds_2}
%
\end{proof}

\section{Upper bounds for Fourier multipliers on \texorpdfstring{$\rSU2$}{SU(2)}}
\label{SEC:upper-bounds}

In this section we give a noncommutative $\rSU2$ analogue of
the upper bound for Fourier multipliers, analogous to the one on the circle
$\TT$ in Theorem \ref{THM:NT}
(see also \cite{Nursultanov:MZ-1998,Nursultanov-Tikhonov:JGA-2011} for the circle case).

\begin{thm}\label{THM:upper}
If $1<p\leq 2\leq q<\infty$ and $A$ is a left-invariant operator on $\rSU2$,
then we have
\begin{equation}
\label{EQ:upper}
	\|A\|_{L^p(\rSU2)\to L^q(\rSU2)}
\lesssim
\sup_{s>0}s\left( \sum_{\substack{l\in\frac12\mathbb{N}_0 \\ \|\sigma_A(l)\|_{\op}>s}} (2l+1)^2\right)^{\frac1p-\frac1{q}}.	
\end{equation}
\end{thm}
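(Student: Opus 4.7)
My plan is to combine the dual Hausdorff--Young inequality on $\rSU2$ with the generalized Paley inequality (Corollary \ref{Cor:general_Paley_inequality}), handling the full range by duality and reducing to a single sum on the Fourier side that must be controlled by the weak-type hypothesis together with Hausdorff--Young.

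First, by duality, since $A^{*}$ has symbol $\sigma_{A^{*}}(l)=\sigma_A(l)^{*}$ with the same operator norm as $\sigma_A(l)$, the right-hand side of \eqref{EQ:upper} is invariant under $A\leftrightarrow A^{*}$. Combined with $\|A\|_{L^p\to L^q}=\|A^{*}\|_{L^{q'}\to L^{p'}}$ and $1/p-1/q=1/q'-1/p'$, this allows me to assume $q\le p'$, equivalently $p\le q'\le p'$, which is exactly the range where Corollary \ref{Cor:general_Paley_inequality} applies with $b=q'$. Since $q\ge 2$, the dual Hausdorff--Young inequality \eqref{H-Y} gives $\|Af\|_{L^q(\rSU2)}\lesssim \|\widehat{Af}\|_{\ell^{q'}(\SU2)}$, and together with $\|\sigma_A(l)\widehat f(l)\|_{\HS}\le\|\sigma_A(l)\|_{\op}\|\widehat f(l)\|_{\HS}$ this reduces the problem to estimating
\[
\sum_{l\in\frac12\NN_0}(2l+1)^{q'(2/q'-1/2)}\|\sigma_A(l)\|_{\op}^{q'}\|\widehat f(l)\|_{\HS}^{q'}
\]
by $K^{q'(1/p-1/q)}\|f\|_{L^p}^{q'}$, with $K$ denoting the right-hand side of \eqref{EQ:upper}.

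For the latter estimate I would decompose the symbol dyadically along $E_k=\{l\in\frac12\NN_0:2^k<\|\sigma_A(l)\|_{\op}\le 2^{k+1}\}$, with $n_k:=\sum_{l\in E_k}(2l+1)^2\le K\cdot 2^{-k}$ from the weak-type hypothesis. For the dyadic piece $A^k$ with symbol $\sigma_A\chi_{E_k}$, H\"older's inequality in $l\in E_k$ with exponents $p'/q'$ and $p'/(p'-q')$ (valid since $p\le q'\le p'$), combined with the full Hausdorff--Young bound $\|\widehat f\|_{\ell^{p'}(\SU2)}\lesssim\|f\|_{L^p(\rSU2)}$, yields
\[
\|A^k f\|_{L^q(\rSU2)}\lesssim 2^k\,n_k^{1/p-1/q}\|f\|_{L^p(\rSU2)}\lesssim K^{1/p-1/q}\,2^{k(1-1/r)}\|f\|_{L^p(\rSU2)},
\]
where $1/r:=1/p-1/q$.

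The hard part will be assembling the dyadic pieces into an estimate for $\|Af\|_{L^q}$: the naive triangle inequality in $L^q$ produces only a bound of order $K$ rather than the sharp $K^{1/r}$, because the geometric factor $\sum_k 2^{k(1-1/r)}$ accumulates an extra $K^{1-1/r}$ once one uses the trivial upper bound $\|\sigma_A(l)\|_{\op}\le K$ (which follows from the weak-type hypothesis together with $(2l+1)^2\ge 1$) to truncate the sum at $k\le\log_2 K$. Overcoming this requires a more refined assembly, either by invoking the real interpolation Theorem \ref{THM:L_p-weighted-interpolation} on the operator $f\mapsto Af$ viewed as bounded between appropriate weighted sequence spaces over $\SU2$ to interpolate directly to the correct exponent, or by establishing a restricted weak-type estimate for $A$ in a single truncation of $\sigma_A$ at level $s>0$, optimizing over $s$, and passing to strong type via a Marcinkiewicz-type argument. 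Either refinement replaces the divergent geometric sum by the single factor $K^{1/p-1/q}$, yielding the claimed bound.
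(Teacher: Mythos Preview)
Your setup coincides with the paper's: reduce by duality to $p\le q'\le p'$, apply the Hausdorff--Young inequality in the form $\|Af\|_{L^q}\le\|\widehat{Af}\|_{\ell^{q'}(\SU2)}$, and use $\|\sigma_A(l)\widehat f(l)\|_{\HS}\le\|\sigma_A(l)\|_{\op}\|\widehat f(l)\|_{\HS}$. Where you diverge is in the ``hard part'': you abandon Corollary~\ref{Cor:general_Paley_inequality} (which you named in your plan) and attempt a dyadic decomposition in the level sets of $\|\sigma_A(l)\|_{\op}$, which, as you correctly diagnose, loses a factor $K^{1-1/r}$ under the triangle inequality. You then only gesture at possible repairs without carrying one out, so as written the proof has a genuine gap.

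The point you are missing is that Corollary~\ref{Cor:general_Paley_inequality} already closes this gap in one stroke, provided you choose the auxiliary symbol correctly. Take $b=q'$ and set $\sigma(l):=\|\sigma_A(l)\|_{\op}^{\,r}\,I_{2l+1}$ with $\tfrac1r=\tfrac1p-\tfrac1q$. Then $\tfrac1b-\tfrac1{p'}=\tfrac1{q'}-\tfrac1{p'}=\tfrac1p-\tfrac1q=\tfrac1r$, so $\|\sigma(l)\|_{\op}^{1/b-1/p'}=\|\sigma_A(l)\|_{\op}$ and the left-hand side of \eqref{EQ:general_Paley_inequality} becomes exactly the sum you need to bound. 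On the right, the substitution $s\mapsto s^{r}$ turns $K_\sigma^{1/r}$ into $\sup_{s>0}s\bigl(\sum_{\|\sigma_A(l)\|_{\op}>s}(2l+1)^2\bigr)^{1/r}$, which is the desired bound. This is precisely the paper's proof. Your proposed fallback via Theorem~\ref{THM:L_p-weighted-interpolation} would also work, but it amounts to re-deriving Corollary~\ref{Cor:general_Paley_inequality} from Theorem~\ref{THM:Paley_inequality} and Hausdorff--Young---so you would simply be reproving the tool you already have available.
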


\begin{proof}
Since $A$ is a left-invariant operator, it acts on $f$ via the multipication of $\widehat{f}$ by the symbol $\sigma_A$
\begin{equation}
\widehat{Af}(\pi)=\sigma_A(\pi)\widehat{f}(\pi),
\end{equation}
where 
$$
\sigma_A(\pi)=\pi(x)^*A\pi(x)\big|_{x=e}.
$$

Let us first assume that $p\leq q'$.
Since $q'\leq 2$, for $f\in C^{\infty}(\rSU2)$ the Hausdorff-Young inequality gives 
\begin{align}
\label{EQ:Af-norm}
\begin{split}
\|Af\|_{L^q(\rSU2)}
&
\leq
\|\widehat{Af}\|_{\ell^{q'}(\SU2)}
=
\|\sigma_A\widehat{f}\|_{\ell^{q'}(\SU2)}
\\
&=
\left(
\sum\limits_{l\in\SU2}(2l+1)^{2-\frac{q'}2}
\|\sigma_A(l)\widehat{f}(l)\|^{q'}_{\HS}
\right)^{\frac1{q'}}
\\&\leq
\left(
\sum\limits_{l\in\SU2}(2l+1)^{2-\frac{q'}2}
\|\sigma_A(l)\|^{q'}_{\op}\|\widehat{f}(l)\|^{q'}_{\HS}
\right)^{\frac1{q'}}
.
\end{split}
\end{align}
The case $q'\leq (p')'$ can be reduced to the case $p\leq q'$ as follows.
The application of Theorem \ref{THM:adjoint-operator} with $G=\rSU2$ and $\mu=\{\text{Haar measure on $\rSU2$}\}$ yields
\begin{equation}
\label{EQ:A-A-star-norm}
\|A\|_{L^p(\rSU2)\to L^q(\rSU2)}
=
\|A^*\|_{L^{q'}(\rSU2)\to L^{p'}(\rSU2)}.
\end{equation}
The symbol $\sigma_{A^*}(l)$ of the adjoint operator $A^*$ equals to $\sigma_{A}^*(l)$
\begin{equation}
\label{EQ:symbol-conjugate}
\sigma_{A^*}(l)=\sigma_A^*(l),\quad l\in\frac12\NN_0,
\end{equation}
and its operator norm $\|\sigma_{A^*}(l)\|_{\op}$ equals to $\|\sigma_{A}(l)\|_{\op}$.
Now, we are in a position to apply Corollary \ref{Cor:general_Paley_inequality}. 
Set $\frac1r=\frac1p-\frac1q$. We observe that
with $\sigma(t^l):=\|\sigma_A(t^l)\|^r_{op} I_{2l+1},l\in\frac12\NN_0$ and $b=q'$, the assumptions of Corollary \ref{Cor:general_Paley_inequality} are satisfied and  we obtain
\begin{align}
\begin{split}
\left(
\sum\limits_{l\in\SU2}(2l+1)^{2-\frac{q'}2}
\|\sigma_A(l)\|^{q'}_{\op}\|\widehat{f}(l)\|^{q'}_{\HS}
\right)^{\frac1{q'}}
\\\lesssim
\left(
\sup_{s>0}s\sum\limits_{\substack{t^l\in\SU2\\\|\sigma(t^l)\|^r_{\op}>s}}(2l+1)^2
\right)^{\frac1r}
\|f\|_{L^p(\rSU2)},\quad f\in L^p(\rSU2),
\end{split}
\end{align}
in view of $\frac1{q'}-\frac1{p'}=\frac1p-\frac1q=\frac1r$.
Thus, for $1<p\leq 2 \leq q <\infty$, we obtain
\begin{equation}
\|Af\|_{L^q(\rSU2)}
\lesssim
\left(
\sup_{s>0}s\sum\limits_{\substack{t^l\in\SU2\\\|\sigma(t^l)\|^r_{\op}>s}}(2l+1)^2
\right)^{\frac1r}
\|f\|_{L^p(\rSU2)}.
\end{equation}
Further, it can be easily checked that
\begin{multline*}
\left(
\sup_{s>0}s\sum\limits_{\substack{t^l\in\SU2\\\|\sigma(t^l)\|_{op}>s}}(2l+1)^2
\right)^{\frac1r}
=
\left(
\sup_{s>0}s\sum\limits_{\substack{t^l\in\SU2\\\|\sigma_A(t^l)\|_{\op}>s^{\frac1r}}}(2l+1)^2
\right)^{\frac1r}
\\=
\left(
\sup_{s>0}s^{r}\sum\limits_{\substack{t^l\in\SU2\\\|\sigma_A(t^l)\|_{\op}>s}}(2l+1)^2
\right)^{\frac1r}
=
\sup_{s>0}s \left(\sum\limits_{\substack{t^l\in\SU2\\\|\sigma_A(t^l)\|_{\op}>s}}(2l+1)^2\right)^{\frac1r}.
\end{multline*}
This completes the proof.
\end{proof}
For the completness, we give a short proof of Theorem \ref{THM:adjoint-operator} used in the proof.
\begin{thm} 
\label{THM:adjoint-operator}
Let $(X,\mu)$ be a measure space and $1<p,q<\infty$. Then we have 
\begin{equation}
\|A\|_{L^p(X,\mu)\to L^q(X,\mu)}
=
\|A^*\|_{L^{q'}(X,\mu)\to L^{p'}(X,\mu)},
\end{equation}
where $A^*\colon L^{q'}(X,\mu)\to L^{p'}(X,\mu)$ is the adjoint of $A$.
\end{thm}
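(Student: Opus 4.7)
The plan is to obtain the equality by the standard duality characterisation of operator norms on reflexive $L^p$ spaces. Since $1<p,q<\infty$, both $L^p(X,\mu)$ and $L^q(X,\mu)$ are reflexive, and their duals are isometrically identified with $L^{p'}(X,\mu)$ and $L^{q'}(X,\mu)$ respectively via the pairing $\langle h,k\rangle=\int_X h(x)\overline{k(x)}\,d\mu(x)$.

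First, I would write the operator norm of $A\colon L^p\to L^q$ as a double supremum using duality of $L^q$:
\[
\|A\|_{L^p\to L^q}
=\sup_{\|f\|_{L^p}=1}\|Af\|_{L^q}
=\sup_{\|f\|_{L^p}=1}\;\sup_{\|g\|_{L^{q'}}=1}|\langle Af,g\rangle|.
\]
Next I would invoke the defining relation of the adjoint, $\langle Af,g\rangle=\langle f,A^*g\rangle$ for all $f\in L^p$ and $g\in L^{q'}$, and swap the order of the two suprema, which is always allowed for nonnegative quantities:
\[
\|A\|_{L^p\to L^q}
=\sup_{\|g\|_{L^{q'}}=1}\;\sup_{\|f\|_{L^p}=1}|\langle f,A^*g\rangle|
=\sup_{\|g\|_{L^{q'}}=1}\|A^*g\|_{L^{p'}}
=\|A^*\|_{L^{q'}\to L^{p'}},
\]
where in the penultimate step I used the duality characterisation once more, this time that $\|h\|_{L^{p'}}=\sup_{\|f\|_{L^p}=1}|\langle f,h\rangle|$, applied with $h=A^*g$.

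There is essentially no obstacle here: the only subtlety is ensuring that $A^*$ is well defined as a bounded operator $L^{q'}\to L^{p'}$, which follows automatically from $A$ being bounded $L^p\to L^q$ together with reflexivity. The hypotheses $1<p,q<\infty$ are precisely what guarantees this reflexivity and the isometric identification of duals, so the argument closes cleanly and gives the claimed equality of norms.
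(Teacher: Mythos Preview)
Your proof is correct and follows essentially the same duality argument as the paper's own proof. The only cosmetic difference is organisational: the paper establishes the two inequalities $\|A\|_{L^p\to L^q}\leq\|A^*\|_{L^{q'}\to L^{p'}}$ and $\|A^*\|_{L^{q'}\to L^{p'}}\leq\|A\|_{L^p\to L^q}$ separately via H\"older's inequality, whereas you obtain the equality in a single chain by swapping the order of two suprema.
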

\begin{proof}[Proof of Theorem \ref{THM:adjoint-operator}]
Let $f\in L^p\cap L^2$ and $g\in L^{q'}\cap L^2$. By H\"older inequality, we have
\begin{equation}
\left|(Af,g)_{L^2}\right|
=
\left|(A^*g,f)_{L^2}\right|
\leq
\|A^*g\|_{L^{p'}}\|f\|_{L^{p}}
\leq
\|A^*\|_{L^{q'}\to L^{p'}}
\|g\|_{L^{q'}}
\|f\|_{L^{p}}.
\end{equation}
Thus, we  get
\begin{equation}
\label{EQ:A-A-star}
\|A\|_{L^p\to L^q}
\leq
\|A^*\|_{L^{q'}\to L^{p'}}.
\end{equation}
Analogously, we show that
\begin{equation}
\label{EQ:A-star-A}
\|A^*\|_{L^{q'}\to L^{p'}}
\leq
\|A\|_{L^p\to L^q}.
\end{equation}
The combination of \eqref{EQ:A-A-star} and \eqref{EQ:A-star-A} yields
$$
\|A\|_{L^p\to L^q}
=
\|A^*\|_{L^{q'}\to L^{p'}}.
$$
This completes the proof.
\end{proof}

\section{Proofs of Theorems from Section \ref{SEC:Hardy_Littlewood_Paley_inequalities}}
\label{SEC:Hardy_Littlewood_Paley_inequalities_Proofs}

\begin{proof}[Proof of Theorem \ref{THM:Paley_inequality}]

Let $\mu$ give measure $\|\sigma(t^l)\|^2_{op}(2l+1)^2,l\in\frac12\NN_0$ to the set consisting of the single point $\{t^l\}, t^l\in\SU2$, 
and measure zero to a set which does not contain any of these points, i.e.
$$
	\mu\{t^l\}:=\|\sigma(t^l)\|^2_{op}(2l+1)^2.
$$
We define the space $L^p(\SU2,\mu)$, $1\leq p<\infty$, 
as the space of complex (or real) sequences
$a=\{a_{l}\}_{l\in\frac12\NN_0}$ such that
\begin{equation}\label{EQ:Lpmu}
\|a\|_{L^p(\SU2,\mu)}
:=
\left(
\sum\limits_{\substack{l\in\frac12\NN_0}}
|a_{l}|^p
\|\sigma(t^l)\|^2_{op}(2l+1)^2
\right)^{\frac1p}
<\infty.
\end{equation}
We will show that the sub-linear operator
$$
	A\colon L^p(\rSU2)\ni f \mapsto Af=
	\left\{\frac{\|\widehat{f}(t^l)\|_{\HS}}{\sqrt{2l+1}\|\sigma(t^l)\|_{op}}\right\}_{t^l\in\SU2}\in L^p(\SU2,\mu)
$$
is well-defined and bounded from $L^p(\rSU2)$ to $L^p(\SU2,\mu)$ for $1<p\leq 2$. 
In other words, we claim that we have the estimate
\begin{multline}
\label{Paley_inequality_alt}
	\|Af\|_{L^p(\SU2,\mu)}
=
\left(
\sum\limits_{\substack{t^l\in\SU2}}
\left(
\frac{\|\widehat{f}(t^l)\|_{\HS}}{\sqrt{2l+1}\|\sigma(t^l)\|_{op}}
\right)^p
\|\sigma(t^l)\|^2_{op}(2l+1)^2
\right)^{\frac1p}
\\\lesssim
K^{\frac{2-p}{p}}_{\sigma}
\|f\|_{L^p(\rSU2)},
\end{multline}
which would give \eqref{EQ:Paley_inequality}
and where we set $K_{\sigma}:=\sup_{s>0}s\sum\limits_{\substack{t^l\in\SU2\\ \|\sigma(t^l)\|_{op}\geq s}}(2l+1)^2$. 
We will show that $A$ is of weak type (2,2) and of weak-type (1,1). For definition and discussions we refer to Section \ref{SEC:Marc_Interpol_Theorem} where we give definitions of weak-type, formulate and prove Marcinkiewicz interpolation Theorem \ref{THM:Marc-Interpol-Un-Dual}.
More precisely, with the distribution function $\nu$ as in Theorem \ref{THM:Marc-Interpol-Un-Dual},
we show that
\begin{eqnarray}
\label{EQ:THM:Paley_inequality_weak_1}
\nu_{\SU2}(y;Af)
&\leq &
\left(\frac{M_2\|f\|_{L^2(\rSU2)}}{y}\right)^2  \quad\text{with norm } M_2 = 1,\\
\label{EQ:THM:Paley_inequality_weak_2}
\nu_{\SU2}(y;Af)
&\leq &
\frac{M_1\|f\|_{L^1(\rSU2)}}{y}  \qquad\text{with norm } M_1 = K_{\sigma}.
\end{eqnarray}
Then \eqref{Paley_inequality_alt} would follow by Marcinkiewicz interpolation 
Theorem \ref{THM:Marc-Interpol-Un-Dual}. 
Now, to show \eqref{EQ:THM:Paley_inequality_weak_1}, using Plancherel's identity \eqref{EQ:plancherel},
 we get
\begin{multline*}
		y^2
		\nu_{\SU2}(y;Af)
		\leq
	\|Af\|^2_{L^p(\SU2,\mu)}
:=
\sum\limits_{\substack{t^l\in\SU2}}
\left(
\frac{\|\widehat{f}(t^l)\|_{\HS}}{\sqrt{2l+1}\|\sigma(t^l)\|_{op}}
\right)^2
\|\sigma(t^l)\|^2_{op}(2l+1)^2
\\=
\sum\limits_{\substack{t^l\in\SU2}}
(2l+1)
\|\widehat{f}(t^l)\|^2_{\HS}
=
\|\widehat{f}\|^2_{\ell^2(\SU2)}
=
\|f\|^2_{L^2(\rSU2)}.
\end{multline*}
Thus, $A$ is of type (2,2) with norm $M_2\leq1$.
Further, we show that $A$ is of weak-type (1,1) with norm $M_1=C$; more precisely, we show that
\begin{equation}
\label{weak_type}
\nu_{\SU2}\{t^l\in\SU2 \colon \frac{\|\widehat{f}(t^l)\|_{\HS}}{\sqrt{2l+1}\|\sigma(t^l)\|_{op}} > y\}
\lesssim
K_{\sigma}\,
\dfrac{\|f\|_{L^1(\rSU2)}}{y}.
\end{equation}
The left-hand side here is the weighted sum $\sum \|\sigma(t^l)\|^2_{op}(2l+1)^2$ taken over those $t^l\in\SU2$ for which $\dfrac{\|\widehat{f}(t^l)\|_{\HS}}{\sqrt{2l+1}\|\sigma(t^l)\|_{op}}>y$. From the definition of the Fourier transform  it follows that
$$
	\|\widehat{f}(t^l)\|_{\HS}\leq \sqrt{2l+1}\|f\|_{L^1(\rSU2)}.
$$
Therefore, we have
$$
y<\frac{\|\widehat{f}(t^l)\|_{\HS}}{\sqrt{2l+1}\|\sigma(t^l)\|_{op}}
\leq
\frac{\|f\|_{L^1(\rSU2)}}{\|\sigma(t^l)\|_{op}}.
$$
Using this, we get
$$
\left\{
t^l\in\SU2\colon 
\frac{\|\widehat{f}(t^l)\|_{\HS}}{\sqrt{2l+1}\|\sigma(t^l)\|_{op}}>y
\right\}
\subset
\left\{
t^l\in\SU2\colon 
\frac{\|f\|_{L^1(\rSU2)}}{\|\sigma(t^l)\|_{op}}>y
\right\}
$$
for any $y>0$. Consequently,
$$
\mu\left\{
t^l\in\SU2\colon 
\frac{\|\widehat{f}(t^l)\|_{\HS}}{\sqrt{2l+1}\|\sigma(t^l)\|_{op}}>y
\right\}
\leq
\mu\left\{
t^l\in\SU2\colon 
\frac{\|f\|_{L^1(\rSU2)}}{\|\sigma(t^l)\|_{op}}>y
\right\}.
$$
Setting $v:=\frac{\|f\|_{L^1(\rSU2)}}{y}$, we get

\begin{equation}
\label{PI_intermed_est_1}
\mu\left\{
t^l\in\SU2\colon 
\frac{\|\widehat{f}(t^l)\|_{\HS}}{\sqrt{2l+1}\|\sigma(t^l)\|_{op}}>y
\right\}
\leq
\sum\limits_{\substack{t^l\in\SU2 \\ \|\sigma(t^l)\|_{op}\leq v}}
\|\sigma(t^l)\|^2_{op}(2l+1)^2.
\end{equation}
We claim that
\begin{equation}\label{EQ:aux1}
\sum\limits_{\substack{t^l\in\SU2 \\ \|\sigma(t^l)\|_{op}\leq v}}
\|\sigma(t^l)\|^2_{op}(2l+1)^2
\lesssim 
K_{\sigma}
v.
\end{equation}
In fact, we have
$$
	\sum\limits_{\substack{t^l\in\SU2 \\ \|\sigma(t^l)\|_{op}\leq v}}
\|\sigma(t^l)\|^2_{op}(2l+1)^2
=
	\sum\limits_{\substack{t^l\in\SU2 \\ \|\sigma(t^l)\|_{op}\leq v}}
(2l+1)^2\int\limits^{\|\sigma(t^l)\|^2_{op}}_0 d\tau.
$$
We can interchange sum and integration to get
$$
	\sum\limits_{\substack{t^l\in\SU2 \\ \|\sigma(t^l)\|_{op}\leq v}}
(2l+1)\int\limits^{\|\sigma(t^l)\|^2_{op}}_0 d\tau
=
\int\limits^{v^2}_0 d\tau \sum
\limits_{\substack{t^l\in\SU2 \\ \tau^{\frac12}\leq \|\sigma(t^l)\|_{op}\leq v}}
(2l+1)^2.
$$
Further, we make a substitution $\tau=s^2$, yielding
\begin{multline*}
\int\limits^{v^2}_0 d\tau \sum\limits_{\substack{t^l\in\SU2 \\ 
\tau^{\frac12}\leq \|\sigma(t^l)\|_{op}\leq v}}(2l+1)^2
=
2\int\limits^{v}_0 s\,ds 
\sum\limits_{\substack{t^l\in\SU2 \\ s \leq \|\sigma(t^{l})\|_{op}\leq v}}(2l+1)^2
\\
\leq
2\int\limits^{v}_0 s\,ds 
\sum\limits_{\substack{t^l\in\SU2 \\ s \leq \|\sigma(t^l)\|_{op}}}(2l+1)^2.
\end{multline*}
Since 
$$
	s\sum\limits_{\substack{t^l\in\SU2 \\ s \leq \|\sigma(t^l)\|_{op} } } (2l+1)^2
	\leq 
\sup_{s>0}	s\sum\limits_{\substack{t^l\in\SU2 \\ s \leq \|\sigma(t^l)\|_{op} } } (2l+1)^2
=:K_{\sigma}
$$
is finite by the definition of $K_{\sigma}$, we have
$$
	2\int\limits^{v}_0 s\,ds 
\sum\limits_{\substack{t^l\in\SU2 \\ s \leq \|\sigma(t^l)\|_{op}}}(2l+1)^2
\lesssim K_{\sigma} v.
$$
This proves \eqref{EQ:aux1}.
We have just proved inequalities
\eqref{EQ:THM:Paley_inequality_weak_1},
\eqref{EQ:THM:Paley_inequality_weak_2}.
Then by using \\Marcinkiewicz' interpolation theorem (Theorem \ref{THM:Marc-Interpol-Un-Dual} from Section \ref{SEC:Marc_Interpol_Theorem})  with $p_1=1, p_2=2$ and 
$\frac1p=1-\theta+\frac{\theta}2$ we now obtain
\begin{multline*}
\left(
\sum\limits_{\substack{l\in\frac12\NN_0}}
\left(
\frac{\|\widehat{f}(\pi)\|_{\HS}}{\sqrt{2l+1}\|\sigma(\pi)\|_{op}}
\right)^p
\|\sigma(\pi)\|^2_{op}(2l+1)^2
\right)^{\frac1p}
\\ =
\|Af\|_{L^p(\SU2,\mu)}
\lesssim
K^{\frac{2-p}{p}}_{\sigma}
\|f\|_{L^p(\rSU2)}.
\end{multline*}
This completes the proof.

\end{proof}

Now we prove the Hardy--Littlewood type inequality given in
Theorem \ref{Akylzhanov_2}.
\begin{proof}[Proof of Theorem \ref{Akylzhanov_2}]
Let $\nu$ give measure $\dfrac1{(2l+1)^4}$ to the set consisting of the single point $l, l=0,\frac12,1,\frac32,2,\ldots$, and measure zero to a set which does not contain any of these points. We will show that the sub-linear operator
$$
Tf := \{(2l+1)^{\frac52}\|\widehat{f}(l)\|_{\HS}\}_{l\in\frac12\mathbb{N}_0} 
$$
is well-defined and bounded from $L^p(\rSU2)$ to $L^p(\frac12\mathbb{N}_0, \nu)$ for 
$1<p\leq 2$, with 
$$
	\|Tf\|_{L^p(\SU2,\nu)}=\left(\sum_{l\in\frac12\mathbb{N}_0}\left( (2l+1)^{\frac52}
	\|\widehat{f}(l)\|_{\HS}\right)^p\cdot(2l+1)^{-4}\right)^{\frac1p}.
$$
This will prove Theorem \ref{Akylzhanov_2}.
We first show that $T$ is of type $(2,2)$ and weak type $(1,1)$.
Using Plancherel's identity \eqref{EQ:plancherel}, we get
\begin{multline*}
	\|Tf\|^2_{L^p(\SU2,\nu)} =
\sum_{l\in \frac12\mathbb{N}_0}(2l+1)^{\frac{5p}2-4}\|\widehat{f}(l)\|^2_{\HS}
=
\sum_{l\in \frac12\mathbb{N}_0}(2l+1)\|\widehat{f}(l)\|^2_{\HS}
\\=
\|\widehat{f}\|^2_{\ell^2(\SU2)}
=\|f\|^2_{L^2(\rSU2)}.
\end{multline*}
Thus, $T$ is of type $(2,2)$.

Further, we show that $T$ is of {\it weak type} (1,1); more precisely we show that
\begin{equation}
\label{weak_estimate}
\nu\{l\in\frac12\mathbb{N}_0\colon (2l+1)^{\frac52}\|\widehat{f}(l)\|_{\HS}>y\}\leq \frac{4}{3}\frac{\|f\|_{L^1(\rSU2)}}{y}.
\end{equation}
The left-hand side here is the sum 
$\displaystyle\sum\frac1{(2l+1)^4}$ taken over those $l\in\frac12\mathbb{N}_0$ for which
$(2l+1)^{\frac52}\|\widehat{f}(l)\|_{\HS}>y$. 
From the definition of the Fourier transform it follows that
$$
	\|\widehat{f}(l)\|_{\HS}\leq \sqrt{2l+1}\|f\|_{L^1(\rSU2)}.
$$
Therefore, we have
$$
y<(2l+1)^{\frac52}\|\widehat{f}(l)\|_{\HS}\leq (2l+1)^{\frac52+\frac12}\|f\|_{L^1(\rSU2)}.
$$
Using this, we get
$$
	\left\{l\in\frac12\mathbb{N}_0 \colon (2l+1)^{\frac52}\|\widehat{f}(l)\|_{\HS}>y \right\}
\subset
\left\{l\in\frac12\mathbb{N}_0 \colon (2l+1)>\left(\frac{y}{\|f\|_{L^1}}\right)^{\frac1{3}}\right\}
$$
for any $y>0$.
Consequently,
$$
\nu\left\{l\in\frac12\mathbb{N}_0 \colon (2l+1)^{\frac52}\|\widehat{f}(l)\|_{\HS}>y \right\}
\leq
\nu\left\{l\in\frac12\mathbb{N}_0 \colon (2l+1)>\left(\frac{y}{\|f\|_{L^1}}\right)^{\frac1{3}}\right\}.
$$
We set $w:=
\left( \frac{y}{\|f\|_{L^1(\rSU2)}}\right)^{\frac1{3}}$. Now, we estimate $\nu\left\{l\in\frac12\mathbb{N}_0 \colon (2l+1)>w\right\}$. By definition, we have
$$
\nu\left\{
l\in\frac12\mathbb{N}_0 \colon (2l+1)>\left(\frac{y}{\|f\|_{L^1}}\right)^{\frac1{3}}
\right\}
=
	\sum^{\infty}_{n>w}\frac1{n^{4}}.
$$
In order to estimate this series, we introduce the following lemma.
\begin{lem} Suppose $\beta>1$ and $w>0$. Then we have
\begin{equation}
	\sum^{\infty}_{n>w}\frac1{n^{\beta}}
\leq
\begin{cases}
\frac{\beta}{\beta-1},& w\leq 1, \\
\frac1{\beta-1}\frac1{w^{\beta-1}},& w > 1.
\end{cases}
\end{equation}
\end{lem}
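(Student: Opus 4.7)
The plan is to prove this lemma by standard integral comparison, exploiting that $x \mapsto x^{-\beta}$ is positive and decreasing on $(0,\infty)$, so that $n^{-\beta} \leq \int_{n-1}^n x^{-\beta}\,dx$ for every integer $n \geq 2$. The two cases are handled by applying this basic bound to different ranges of summation and then matching the resulting constants to those stated.

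For the case $w \leq 1$, every integer $n > w$ lies in $\{1,2,3,\dots\}$, so $\sum_{n>w} n^{-\beta} \leq \sum_{n=1}^\infty n^{-\beta}$. Separating the $n=1$ term and applying the integral estimate termwise yields $\sum_{n=2}^\infty n^{-\beta} \leq \int_1^\infty x^{-\beta}\,dx = 1/(\beta-1)$, so $\sum_{n=1}^\infty n^{-\beta} \leq 1 + 1/(\beta-1) = \beta/(\beta-1)$, which is exactly the claimed bound.

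For the case $w > 1$, let $n_0$ denote the smallest integer strictly greater than $w$; then $n_0 \geq 2$ and $\sum_{n>w} n^{-\beta} = \sum_{n=n_0}^\infty n^{-\beta}$. The straightforward integral test gives $\sum_{n=n_0}^\infty n^{-\beta} \leq \int_{n_0-1}^\infty x^{-\beta}\,dx = (n_0-1)^{1-\beta}/(\beta-1)$. To convert this into the stated bound $w^{1-\beta}/(\beta-1)$, I would either split off the leading term as $n_0^{-\beta} + \int_{n_0}^\infty x^{-\beta}\,dx$ and use $n_0 > w > 1$ to dominate each piece by the corresponding expression in $w$, or refine the integral comparison so that the lower limit of integration is exactly $w$ rather than $n_0-1$.

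The main obstacle is precisely this last step, since $n_0-1 \leq w$ generically, so the naive integral bound produces $(n_0-1)^{1-\beta}$ rather than the sharper $w^{1-\beta}$; any of the refinements above is at worst off by a harmless absolute multiplicative constant, yielding an estimate of the form $C\,(\beta-1)^{-1}w^{1-\beta}$. That constant is harmless for the downstream application, since this lemma is ultimately used to estimate a tail sum in the proof of the weak-type $(1,1)$ bound \eqref{weak_estimate} in Theorem \ref{Akylzhanov_2}, where only the correct order of magnitude in $w$ matters.
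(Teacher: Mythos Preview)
Your treatment of the case $w\leq 1$ is correct and is exactly the standard argument; the paper itself offers no proof beyond ``rather straightforward'', so there is nothing further to compare.

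For the case $w>1$ you are right to flag a gap, and in fact the lemma as stated is \emph{false} for the exact constant $\tfrac{1}{\beta-1}$. Take $\beta=4$ (the only value used in the application) and any $1<w<2$: then
\[
\sum_{n>w}\frac{1}{n^{4}}=\sum_{n=2}^{\infty}\frac{1}{n^{4}}=\zeta(4)-1\approx 0.0823,
\]
whereas for $w$ close to $2$ the claimed bound is
\[
\frac{1}{3}\,\frac{1}{w^{3}}\approx \frac{1}{3\cdot 8}\approx 0.0417.
\]
Indeed the single term $2^{-\beta}$ already exceeds $\tfrac{1}{\beta-1}\,2^{1-\beta}$ whenever $\beta>3$. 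So no refinement of the integral comparison will recover the constant $\tfrac{1}{\beta-1}$; your instinct that one must accept an absolute multiplicative loss is exactly right.

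Your proposed fix---split off the first term and bound the tail by $\int_{n_0}^{\infty}x^{-\beta}\,dx$, then use $n_0>w>1$---works cleanly and gives
\[
\sum_{n>w}\frac{1}{n^{\beta}}\leq n_0^{-\beta}+\frac{n_0^{1-\beta}}{\beta-1}< w^{-\beta}+\frac{w^{1-\beta}}{\beta-1}\leq \frac{\beta}{\beta-1}\,\frac{1}{w^{\beta-1}},
\]
so the correct constant is $\tfrac{\beta}{\beta-1}$ rather than $\tfrac{1}{\beta-1}$. For $\beta=4$ this is $\tfrac{4}{3}$, which is precisely the constant that eventually appears in the weak-type bound \eqref{weak_estimate}, so the downstream argument in Theorem~\ref{Akylzhanov_2} goes through unchanged. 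Your diagnosis and repair are both correct.
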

The proof is rather straightforward.
Now, suppose $w\leq 1$. Then applying this lemma with $\beta=4$, we have
$$
	\sum^{\infty}_{n>w}\frac1{n^{4}}
\leq
	\frac{4}{3}.
$$
Since 
$
	1 \leq \frac1{w^{3}},
$
we obtain
$$
	\sum^{\infty}_{n>w}\frac1{n^{4}}
\leq
	\frac{4}{3}
\leq
\frac{4}{3}\frac1{w^{3}}.
$$
Recalling that $w=\left(\frac{y}{\|f\|_{L^1(\rSU2)}}\right)^{\frac1{3}}$, we finally obtain
$$
\nu\left\{
l\in\frac12\mathbb{N}_0 \colon (2l+1)>\left(\frac{y}{\|f\|_{L^1}}\right)^{\frac1{3}}
\right\}
=
	\sum^{\infty}_{n>w}\frac1{n^{4}}
\leq
	\frac{4}{3} \frac{\|f\|_{L^1(\rSU2)}}{y}.
$$
Now, if $w>1$, then we have
$$
	\sum^{\infty}_{n>w}\frac1{n^{4}} 
\leq
\frac1{3}\frac1{w^{3}}=\frac{4}{3}\frac{\|f\|_{L^1}}{y}.
$$
Finally, we get
$$
\nu\left\{
l\in\frac12\mathbb{N}_0 \colon (2l+1)>\left(\frac{y}{\|f\|_{L^1}}\right)^{\frac1{3}}
\right\}
\leq
\frac{4}{3}\frac{\|f\|_{L^1(\rSU2)}}{y}.
$$
This proves \eqref{weak_estimate}.

By Marcinkiewicz interpolation Theorem \ref{THM:Marc-Interpol-Un-Dual} with $p_1=1, 
p_2=2$, we obtain
$$
	\left(
\sum_{l\in\frac12\mathbb{N}_0}(2l+1)^{\frac52p-4}\|\widehat{f}(l)\|^p_{\HS}
\right)^{\frac1p}
=
\|Tf\|_{L^p(\SU2,\nu)}
\leq
c_p \|f\|_{L^p(\rSU2)}.
$$
This completes the proof of Theorem \ref{Akylzhanov_2}.
.
\end{proof}

Now we prove Theorem \ref{Akylzhanov_1}.

\begin{proof}[Proof of Theorem \ref{Akylzhanov_1}]
We first simplify the expression for $\Tr\widehat{f}(k)$.
By definition, we have
$$
	\widehat{f}(k) = \int\limits_{\rSU2}f(u) T^k(u)^*\,du,\quad k\in\frac12\mathbb{N}_0,
$$
where $T^k$ is a finite-dimensional representation of $\SU2$ as in 
Section \ref{SEC:Hardy_Littlewood_Paley_inequalities}.
Using this, we get
\begin{equation}
\label{Trace}
\Tr\widehat{f}(k) = \int\limits_{\rSU2}f(u)\overline{\chi_k(u)}\,du,
\end{equation}
where $\chi_k(u)=\Tr T^k(u),\, k \in\frac12\mathbb{N}_0$, where we changed the notation
from $t^{k}$ to $T^{k}$ to avoid confusing with the notation that follows.
The characters $\chi_k(u)$ are constant on the conjugacy classes of $\rSU2$ and
we follow \cite{Vilenkin:BK-eng-1968} to describe these classes explicitly. 

It is well known from linear algebra that any unitary unimodular matrix $u$ can be written in the form $u=u_1 \delta u^{-1}_1$, where $u_1\in \rSU2$ and $\delta$ is a diagonal matrix of the form
\begin{equation}
\label{delta_matrix}
	\delta =
\begin{pmatrix}
e^{\frac{it}2} & 0 \\
0 & e^{-\frac{it}2}
\end{pmatrix},
\end{equation}
where $\lambda=e^{\frac{it}2}$ and $\frac1{\lambda}=e^{-\frac{it}2}$ are the eigenvalues of $u$. 
Moreover, among the matrices equivalent to $u$ there is only one other diagonal matrix, 
namely, the matrix $\delta'$ obtained from $\delta$ by interchanging the diagonal elements. 

Hence, classes of conjugate elements in $\rSU2$ are given by one parameter $t$, varying in the limits $-2\pi \leq t \leq 2\pi$, where the parameters $t$ and $-t$ give one and the same class.
Therefore, we can regard the characters $\chi_k(u)$ as functions of one variable $t$, which ranges from $0$ to $2\pi$.

The special unitary group $\rSU2$ is isomorphic to the group of unit quaternions. Hence, the parameter $t$ has a simple geometrical meaning - it is equal to angle of rotation which corresponds to the matrix $u$. 

Let us now derive an explicit  expression for the $\chi_k(u)$ as function of $t$. It was shown e.g. in \cite{RT} that  $T^k(\delta)$ is a diagonal matrix with the numbers $e^{-int},\,-k\leq n \leq k$ on its principal diagonal.

Let $u=u_1\delta u^{-1}_1$. Since characters are constant on conjugacy classes of elements, we get
\begin{equation}
\label{char_expl}
	\chi_k(u) = \chi_k(\delta) = \Tr {T^k(\delta)}=\sum_{n=-k}^{k}e^{int}.
\end{equation}
It is natural to express the invariant integral over $\rSU2$ in
\eqref{Trace}
in new parameters, one of which is $t$.

Since special unitary group $\rSU2$ is diffeomorphic to the unit sphere $\mathbb S^3$ in  
$\mathbb{R}^{4}$ (see, e.g., \cite{RT}), with
$$
\rSU2\ni u = 
\begin{pmatrix}
x_1+ix_2 & x_3+ix_4 \\
-x_3+ix_4 & x_1-ix_2
\end{pmatrix}
\longleftrightarrow
\varphi(u)=x=(x_1,x_2,x_3,x_4) \in \mathbb S^3,
$$
 we have 
\begin{equation}
\label{trace_aux_1}
	\int\limits_{\rSU2}f(u)\chi_k(u)\,du = \int\limits_{\mathbb S^3}f(x)\chi_k(x)\,dS,
\end{equation}
where $f(x):=f(\varphi^{-1}(x))$, and $\chi_k(x) := \chi_k(\varphi^{-1}(x))$.
In order to find an explicit formula for this integral over $\mathbb S^3$, 
we consider the parametrisation 
\begin{align*}
	&x_1 = \cos \frac{t}2,\\
	&x_2 =  v,\\
	&x_3 =\sqrt{\sin^2 \frac{t}2-v^2}\cdot\cos h,\\
	&x_4 =\sqrt{\sin^2 \frac{t}2-v^2}\cdot\sin h,\quad  (t,v,h)\in D,
\end{align*}
where $D=\{ (t,v,h)\in\mathbb{R}^3\colon |v|\leq \sin\frac{t}2, 0\leq t, h \leq 2\pi\}$.

The reader will have no difficulty in showing that
$
	dS = \sin \frac{t}2 dt dv dh.
$
Therefore, we have 
$$
   \int\limits_{\mathbb S^3}f(x)\chi_k(t) dS =
\int\limits_{D}f(h,v,t)\chi_k(t)\sin \frac{t}2 \,dhdvdt.
$$
Combining this and \eqref{trace_aux_1}, we get
$$
	\Tr \widehat{f}(k) = \int\limits_{D}f(h,v,t)\chi_k(t)\sin \frac{t}2 \,dhdvdt.
$$
Thus, we have expressed the invariant integral over $\rSU2$ in the parameters $t,v,h$.
The application of Fubini's Theorem yields
$$
 \int\limits_{D}f(h,v,t)\chi_k(t)\sin \frac{t}2\,dhdvdt  = 
\int\limits^{2\pi}_{0}\chi_k(t)\sin \frac{t}2\,dt\int\limits^{\sin \frac{t}2}_{-\sin \frac{t}2} dv \int\limits^{2\pi}_{0} f(h,v,t) \,dh .
$$
Combining this and \eqref{char_expl}, we obtain
$$
	\Tr \widehat{f}(k) = 
\int\limits^{2\pi}_{0}\,dt\sum_{n=-k}^{k}e^{int}\sin \frac{t}2\int\limits^{\sin \frac{t}2}_{-\sin \frac{t}2} dv \int\limits^{2\pi}_{0} f(h,v,t) \,dh .
$$
Interchanging summation and integration, we get
$$
	\Tr \widehat{f}(k) = \sum_{n=-k}^{k}
\int\limits^{2\pi}_{0} e^{int}\sin \frac{t}2\,dt\int\limits^{\sin \frac{t}2}_{-\sin \frac{t}2} dv \int\limits^{2\pi}_{0} f(h,v,t) \,dh .
$$
By making the change of variables $t\to 2t$, we get
\begin{equation}
\label{trace_explicit}
	\Tr \widehat{f}(k) = \sum_{n=-k}^{k}
\int\limits^{\pi}_{0} e^{-i2nt}\cdot2\sin t\,dt\int\limits^{\sin t}_{-\sin t} dv \int\limits^{2\pi}_{0} f(h,v,2t) \,dh .
\end{equation}

Let us now apply Theorem \ref{NED} in $L^p(\mathbb{T})$.  
To do this we introduce some notation.
Denote 
$$
F(t) := 
2\sin t\int\limits^{\sin t}_{-\sin t}  \int\limits^{2\pi}_{0} f(h,v,2t) \,dh\,dv, \quad t\in(0,\pi).
$$
We extend $F(t)$ periodically to $[0,2\pi)$, that is $F(x+\pi)=F(x)$. Since $f(t,v,h)$ is integrable, the integrability of $F(t)$ follows immediately from Fubini's Theorem. Thus function $F(t)$ has a Fourier series representation
$$
	F(t) \sim \sum_{k\in\mathbb{Z}}\widehat{F}(k)e^{ik t},
$$
where the Fourier coefficients are computed by
$$
	\widehat{F}(k)=\frac{1}{2\pi} \int\limits_{[0,2\pi]}F(t)e^{-ik t}\,dt.
$$
Let $A_k$ be a $2k+1$-element arithmetic sequence with step 2 and initial term $-2k$, i.e.,
$$
	A_k = \{-2k,-2k+2,\ldots, 2k\}=\{-2k+2j\}^{2k}_{j=0}.
$$
Using this notation and \eqref{trace_explicit}, we have
\begin{equation}\label{reduced}
	\Tr \widehat{f}(k)
=
\sum_{n\in A_k}\widehat{F}(n)	.
\end{equation}
Define
$$
	 B= \{A_k\}^{\infty}_{k=1}.
$$
Using the fact that	$B$ is a subset of the set $M$ of all finite arithmetic progressions,
and \eqref{reduced},
we have
\begin{equation}
\label{EQ:THM_necess_SU2_aux_1}
\sup_{\substack{k\in\frac12\mathbb{N}_0\\ 2k+1\geq 2l+1}}\frac1{2k+1}\left|\Tr \widehat{f}(k)\right|
\leq
\sup_{\substack{e \in B \\ |e|\geq 2l+1}}\frac1{|e|}\left|\sum_{i\in e}\widehat{F}(i)\right|
\leq
\sup_{\substack{e\in M \\ |e|\geq 2l+1}}\frac1{|e|}\left|\sum_{i\in e}\widehat{F}(i)\right|.
\end{equation}
Denote $m:=2l+1$. If $l$ runs over $\frac12\mathbb{N}_0$, then $m$ runs over $\mathbb{N}$. 
Using \eqref{EQ:THM_necess_SU2_aux_1}, we get

\begin{multline}\label{EQ:THM_necess_SU2_aux_2}
\sum_{l\in\frac12\mathbb{N}_0}(2l+1)^{p-2}\left(\sup_{\substack{k\in\frac12\mathbb{N}_0 \\ 2k+1\geq 2l+1}}\frac1{2k+1}\left|\Tr\widehat{f}(k)\right|\right)^p
\\ \leq
\sum_{m\in\mathbb{N}}m^{p-2}
\left(\sup_{\substack{e\in M \\ |e|\geq m}}\frac1{|e|}\left|\sum_{i\in e}\widehat{F}(i)\right|\right)^p.
\end{multline}
Application of  inequality \eqref{necess_T}   yields
\begin{equation}
\label{necess_S2U_aux_2}
\sum_{m\in\mathbb{N}}m^{p-2}
\left(\sup_{\substack{e\in M \\ |e|\geq m}}\frac1{|e|}\left|\sum_{i\in e}\widehat{F}(i)\right|\right)^p
\leq
c\|F\|^p_{L^p(0,2\pi)}.
\end{equation}
Using H\"older inequality, we obtain
$$
\int\limits^{\pi}_{0}|F(t)|^p\,dt
\lesssim
\int\limits^{\pi}_{0}\sin t\,dt\int\limits^{\sin t}_{-\sin t} dv \int\limits^{2\pi}_{0} |f(h,v,2t)|^p \,dh.
$$
By making the change of variables $t\to \frac{t}2$ in the right hand side integral, we get 
$$
\int\limits^{\pi}_{0}|F(t)|^p\,dt
\lesssim
\left(
\int\limits^{2\pi}_{0}\sin \frac{t}2\,dt\int\limits^{\sin \frac{t}2}_{-\sin \frac{t}2} dv \int\limits^{2\pi}_{0} |f(h,v,t)|^p \,dh
\right)^{\frac1p}.
$$
Thus, we have proved that
\begin{equation}
\label{EQ:THM_necess_SU2_aux_3}
\|F\|_{L^p(0,\pi)}
\leq
c_p\|f\|_{L^p(\rSU2)},
\end{equation}
where $c_p$ depending only on $p$.
Combining \eqref{EQ:THM_necess_SU2_aux_1}, \eqref{EQ:THM_necess_SU2_aux_2} and \eqref{EQ:THM_necess_SU2_aux_3}, we obtain
$$
		\sum_{m\in\mathbb{N}}m^{p-2}\left(\sup_{\substack{k\in\frac12\mathbb{N}_0 \\ 2k+1\geq m}}\frac1{2k+1}\left|\Tr\widehat{f}(k)\right|\right)^p
\leq
c\|f\|^{p}_{L^p(\rSU2)}.
$$
This completes the proof.
\end{proof}

\section{Marcinkiewicz interpolation theorem}
\label{SEC:Marc_Interpol_Theorem}
In this section we formulate and prove Marcinkiewicz interpolation theorem for linear 
mappings between $G$ and the space of matrix-valued sequences $\Sigma$ that will be realised via
$$\Sigma:=\left\{ h=\{h(\pi)\}_{\pi\in\Gh}, 
h(\pi)\in \mathbb{C}^{d_{\pi}\times d_{\pi}}\right\}.$$
Thus, a linear mapping $A\colon \mathcal D'(G) \to \Sigma$ takes a function to a matrix valued sequence, 
i.e.
$$
f \mapsto Af=:h=\{h(\pi)\}_{\pi\in\Gh},
$$
where
$$
h(\pi)\in \mathbb{C}^{d_{\pi}\times d_{\pi}},\; \pi\in\Gh.
$$
 We say that a linear operator $A$ is of strong type $(p,q)$, if for every $f\in L^p(G)$, 
 we have $Af\in \ell^q(\Gh,\Sigma)$ and
$$
	\|Af\|_{\ell^q(\Gh,\Sigma)}
\leq 
M
\|f\|_{L^p(G)},
$$
where $M$ is independent of $f$, and the space $\ell^q(\Gh,\Sigma)$ defined by the norm
\begin{equation}
\|h\|_{\ell^q(\Gh,\Sigma)}
:=
\left(
\sum\limits_{\pi\in\Gh}
d^{p(\frac2p-\frac12)}
\|h(\pi)\|^p_{HS}
\right)^{\frac1p}
\end{equation}
\eqref{EQ:lp}. The least $M$ for which this is satisfied is taken
to be the strong $(p,q)$-norm of the operator $A$.

Denote the distribution functions of $f$ and $h$ by $\mu_{G}(t;f)$ and $\nu_{\Gh}(u;h)$, 
respectively, i.e.
\begin{eqnarray}
	\mu_{G}(x;f)
:=
\int\limits_{\substack{u\in G \\ |f(u)|\geq x}}du,\, x>0,\\
\nu_{\Gh}(y;h)
:=
\sum\limits_{\substack{
\pi\in\Gh
\\ 
\frac{\|h(\pi)\|_{\HS}}{\sqrt{d_{\pi}}}
\geq y
}}d^2_{\pi},\,y>0.
\end{eqnarray}
Then
$$
\begin{aligned}
\|f\|^p_{L^p(G)}
&=
\int\limits_{G}
|f(u)|^p\,du
=
p\int\limits^{+\infty}_{0}x^{p-1}\mu_{G}(x;f)\,dx,
\\
\|h\|^q_{\ell^q(\Gh,\Sigma)}
&=
\sum\limits_{\pi\in\Gh}d^2_{\pi} \left(
\frac{\|h(\pi)\|_{\HS}}{\sqrt{d_{\pi}}}
\right)^q
=
q\int\limits^{+\infty}_{0}u^{q-1}\nu_{\Gh}(y;h)\,dy.
\end{aligned}
$$
A linear operator $A\colon \mathcal D'(\rSU2) \rightarrow \Sigma$ satisfying
\begin{equation}
\label{EQ:weak_type}
	\nu_{\Gh}(y;Af)
\leq
\left(
\frac{M}{y}\|f\|_{L^p(G)}
\right)^{q}
\end{equation}
is said to be of {\it weak type} $(p,q)$; the least value of $M$ in \eqref{EQ:weak_type} is called weak $(p,q)$ norm of $A$.

Every operation of strong type $(p,q)$ is also of weak type $(p,q)$, since
$$
	y\left(
	\nu_{\Gh}(y;Af)
	\right)^{\frac1q}
\leq
	\|Af\|_{L^q(\Gh)}
\leq
M
\|f\|_{L^p(G)}.
$$
\begin{thm} 
\label{THM:Marc-Interpol-Un-Dual}
Let $1\leq p_1<p<p_2<\infty$. Suppose that a linear operator $A$ 
from $\mathcal D'(G)$ to $\Sigma$
is simultaneously 
of {\it weak types} $(p_1,p_1)$ and $(p_2,p_2)$, with norms $M_1$ and $M_2$,
respectively, i.e.
\begin{eqnarray}
\label{EQ:weak_type_1}
	\nu_{\Gh}(y;Af)
\leq
\left(
\frac{M_1}{y}\|f\|_{L^{p_1}(G)}
\right)^{p_1},
\\
\label{EQ:weak_type_2}
	\nu_{\Gh}(y;Af)
\leq
\left(
\frac{M_2}{y}\|f\|_{L^{p_2}(G)}
\right)^{p_2}.
\end{eqnarray}
 Then for any $p\in(p_1,p_2)$ the operator $A$ is of strong type $(p,p)$ and we have
\begin{equation}
\label{EQ:norm_interpolation_estimate}
\|Af\|_{\ell^p(\Gh,\Sigma)}
\leq
M^{1-\theta}_1M^{\theta}_2\|f\|_{L^p(G)},\quad 0<\theta<1,
\end{equation}
where
\begin{eqnarray*}
\frac1p=\frac{1-\theta}{p_1}+\frac{\theta}{p_2}.
\end{eqnarray*}
\end{thm}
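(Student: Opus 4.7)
The plan is to reproduce the classical Marcinkiewicz interpolation argument, carried out with the scalar function $\pi\mapsto \|h(\pi)\|_{\HS}/\sqrt{d_\pi}$ playing the role of the modulus of the output and with the weighted counting measure $\pi\mapsto d_\pi^2$ on $\Gh$ playing the role of Lebesgue measure. Since the hypotheses \eqref{EQ:weak_type_1}--\eqref{EQ:weak_type_2} are already phrased in terms of the distribution function $\nu_{\Gh}$ attached to exactly this measure and this scalar output, no structural modification of the usual real-variable argument is required; the matrix-valuedness of $Af(\pi)$ only enters through the triangle inequality for $\|\cdot\|_{\HS}$.

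First, for each level $y>0$ I would perform a Calder\'on splitting of $f$ at a threshold $\gamma(y)>0$ to be chosen: set $f=f_1^y+f_2^y$ where $f_1^y:=f\chi_{\{|f|>\gamma(y)\}}$ and $f_2^y:=f-f_1^y$. Because $p_1<p<p_2$, the pointwise bounds $|f_1^y|^{p_1}\leq \gamma(y)^{p_1-p}|f|^p$ and $|f_2^y|^{p_2}\leq \gamma(y)^{p_2-p}|f|^p$ give $f_1^y\in L^{p_1}(G)$ and $f_2^y\in L^{p_2}(G)$. Linearity of $A$ and the triangle inequality for $\|\cdot\|_{\HS}/\sqrt{d_\pi}$ then yield the subadditivity
\begin{equation*}
\nu_{\Gh}(y;Af)\leq \nu_{\Gh}(y/2;Af_1^y)+\nu_{\Gh}(y/2;Af_2^y),
\end{equation*}
to which I apply the weak-type hypotheses to obtain $\nu_{\Gh}(y/2;Af_i^y)\leq (2M_i/y)^{p_i}\|f_i^y\|_{L^{p_i}}^{p_i}$ for $i=1,2$.

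Second, I would insert these bounds into the layer-cake representation
\begin{equation*}
\|Af\|^p_{\ell^p(\Gh,\Sigma)} = p\int_0^{\infty} y^{p-1}\nu_{\Gh}(y;Af)\,dy,
\end{equation*}
express each $\|f_i^y\|_{L^{p_i}}^{p_i}$ as an integral over $G$, and invoke Fubini to interchange the $y$-integral with the $G$-integral. For a suitably chosen $\gamma(y)$ (concretely, $\gamma(y)$ proportional to $y\,(M_2/M_1)^\alpha$ with $\alpha$ determined by $p,p_1,p_2$), both resulting $y$-integrals converge and reduce to absolute constants multiplying $\int_G|f|^p\,du$, producing an estimate of the form $\|Af\|_{\ell^p(\Gh,\Sigma)}\lesssim M_1^{1-\theta}M_2^\theta\,\|f\|_{L^p(G)}$.

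The main obstacle is bookkeeping: after performing the two Fubini computations one is left with a product of powers of $M_1$ and $M_2$, and the exponents must be arranged so that the final output is exactly $M_1^{1-\theta}M_2^{\theta}$. The cleanest way to guarantee this is a scaling reduction performed before the Calder\'on splitting: replace $A$ by $A/(M_1^{1-\theta}M_2^{\theta})$ and simultaneously rescale $f$, reducing to the case $M_1=M_2=1$, in which the optimal threshold is simply $\gamma(y)=y$. After this reduction the two $y$-integrals
$\int_0^{\|f\|_\infty} v^{p-1-p_1}(\cdots)\,dv$ and $\int_0^{\|f\|_\infty} v^{p-1-p_2}(\cdots)\,dv$ converge exactly because $p_1<p<p_2$, and their sum is bounded by a constant $C_{p,p_1,p_2}\|f\|_{L^p(G)}^p$, giving \eqref{EQ:norm_interpolation_estimate}.
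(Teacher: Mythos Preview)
Your approach is essentially identical to the paper's: both run the classical Marcinkiewicz argument (following Zygmund) by combining the layer-cake formula for $\|Af\|_{\ell^p(\Gh,\Sigma)}^p$, the Calder\'on splitting $f=f_1^y+f_2^y$ at a threshold linear in $y$, the subadditivity $\nu_{\Gh}(y;Af)\leq \nu_{\Gh}(y/2;Af_1^y)+\nu_{\Gh}(y/2;Af_2^y)$, the two weak-type bounds, and then Fubini. The paper carries out the Fubini step via the distribution function $\mu_G(\cdot;f)$ rather than integrating directly over $G$, but this is only a cosmetic difference.

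One small correction: your proposed ``scaling reduction to $M_1=M_2=1$'' does not work as stated. Dividing $A$ by $M_1^{1-\theta}M_2^{\theta}$ turns the weak-type constants into $(M_1/M_2)^{\theta}$ and $(M_2/M_1)^{1-\theta}$, which are reciprocals, not both equal to $1$; and since $G$ carries no dilations, rescaling $f$ by a scalar cannot fix this. The paper instead keeps $M_1,M_2$ general, takes the cut $z(y)=A\cdot y$ with a free constant $A$, computes both Fubini integrals, and then chooses $A=M_1^{p_1/(p_1-p_2)}M_2^{p_2/(p_2-p_1)}$ to balance the two terms and produce exactly $M_1^{1-\theta}M_2^{\theta}$. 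This is precisely the optimization you already sketched (``$\gamma(y)$ proportional to $y(M_2/M_1)^\alpha$''), so your argument goes through once you drop the scaling shortcut and carry out that optimization directly.
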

The proof is done in analogy to Zygmund \cite{Zygmund:Marc-JMPA-1956} adapting it
to our setting.
\begin{proof}
Let $f\in L^p(G)$. We have to prove inequality \eqref{EQ:norm_interpolation_estimate}. By definition, we have
\begin{equation}
\label{EQ:THM_Marc_Interpol_Un_Dual_aux_1}
	\|Af\|_{\ell^p(\Gh,\Sigma)}^p
=
	\sum\limits_{\pi\in\Gh}d^2_{\pi}
	\left(
	\frac{\|Af(\pi)\|_{\HS}}{\sqrt{d_{\pi}}}
	\right)^p
	=
\int\limits^{+\infty}_0
px^{p-1}
\nu_{\Gh}(x;Af)
\,dx.
\end{equation}

 For a fixed arbitrary $z>0$ we consider the decomposition
$$
f=f_1+f_2,
$$
where $f_1=f$ whenever $|f|<z$, and $f_1=0$ otherwise; thus $|f_2|>z$ or else $f_2=0$. Since $f\in L^{p}(G)$ the same holds for $f_1$ and $f_2$; it follows that $f_1$ is in $L^{p_1}(G)$ and $f_2\in L^{p_2}(G)$. Hence $Af_1$ and $Af_2$ exist, by hypothesis, and so does $Af=A(f_1+f_2)$. It follows that
\begin{equation}
\label{EQ:decomposition}
	|f_1|=\min(|f|,z),\quad |f|=|f_1|+|f_2|.
\end{equation}
The inequality
$$
	\|A(f_1+f_2)(\pi)\|_{\HS}
\leq
\|Af_1(\pi)\|_{\HS}+
\|Af_2(\pi)\|_{\HS},\,\pi\in\Gh
$$
leads to an inclusion
\begin{multline*}
	\left
	\{
	\pi\in\Gh
	\colon
	\frac{\|Af(\pi)\|_{\HS}}{\sqrt{d_{\pi}}}
	\geq y
	\right
	\}
\subset \\ \subset
	\left
	\{
	\pi\in\Gh
	\colon
	\frac{\|Af_1(\pi)\|_{\HS}}{\sqrt{d_{\pi}}}
	\geq \frac{y}2
	\right
	\}
\bigcup
	\left
	\{
	\pi\in\Gh
	\colon
	\frac{\|Af_2(\pi)\|_{\HS}}{\sqrt{d_{\pi}}}
	\geq \frac{y}2
	\right
	\}.
\end{multline*}
Then applying assumptions \eqref{EQ:weak_type_1} and \eqref{EQ:weak_type_2} to 
$f_1$ and $f_2$,  we obtain
\begin{multline}
\label{EQ:THM_Marc_Interpol_Un_Dual_aux_2}
	\nu_{\Gh}(y;Af)
\leq
	\nu_{\Gh}(\frac{y}2;Af_1)
+
	\nu_{\Gh}(\frac{y}2;Af_2)
\\\leq
	M^{p_1}_1y^{-p_1}\|f_1\|^{p_1}_{L^{p_1}(G)}
+
	M^{p_2}_1y^{-p_2}\|f_2\|^{p_2}_{L^{p_2}(G)}.
\end{multline}
The right side here depends on $z$ and the main idea of the proof consists in defining $z$ as a suitable monotone functions of $t, z=z(t)$, to be determined later.
By \eqref{EQ:decomposition}
\begin{eqnarray*}
\mu_{G}(t;f_1) &= &\mu_{G}(t;f),				\quad \text{ for } 0<t\leq z,
\\
\mu_{G}(t;f_1)&=&0,		     \qquad \qquad		\qquad \text{ for } t>z,
\\
\mu_{G}(t;f_2)&=&\mu_{G}(t+z;f),			\quad \text{ for } t>0.
\end{eqnarray*}
Here, the last equation is a consequence of the fact that wherever $f_2\neq 0$ we must have $|f_1|=z$, and so the second equation \eqref{EQ:decomposition} takes the form $|f|=z+|f_2|$.

It follows from \eqref{EQ:THM_Marc_Interpol_Un_Dual_aux_2} that the last integral in \eqref{EQ:THM_Marc_Interpol_Un_Dual_aux_1} is less than
\begin{multline}
\label{EQ:THM_Marc_Interpol_Un_Dual_aux_3}
M^{p_1}_1\int\limits^{+\infty}_{0}y^{p-p_1-1}
\left
\{
\int\limits_{G}|f_1(u)|^{p_1}\,du
\right
\}^{\frac{p_1}{p_1}}\,dy+
\\+
M^{p_2}_2\int\limits^{+\infty}_{0}y^{p-p_2-1}
\left
\{
\int\limits_{G}|f_2(u)|^{p_1}\,du
\right
\}^{\frac{p_2}{p_2}}\,dy
\\=
M^{p_1}_1
p_1
\int\limits^{+\infty}_{0}y^{p-p_1-1}
\left
\{
\int\limits^z_{0}x^{p_1-1}\mu_{G}(x;f)\,dx
\right
\}\,dt
\\ +
M^{p_2}_2
p_2
\int\limits^{+\infty}_{0}y^{p-p_2-1}
\left
\{
\int\limits^{+\infty}_{z}(x-z)^{p_2-1}\mu_{G}(x;f)\,dx
\right
\}\,dt.
\end{multline}
Set $z(y)=\frac{A}{y}$.
Denote by $I_1$ and $I_2$ the two double integrals last written. 
We change the order of integration in $I_1$
\begin{multline}
\label{EQ:THM_Marc_Interpol_Un_Dual_aux_4}
I_1=
\int\limits^{+\infty}_0
t^{p-p_1-1}
\left
\{
\int\limits^z_0
u^{p_1-1}
\mu_{G}(u;f)
\,du
\right
\}
\,dt	
\\=
\int\limits^{+\infty}_0
x^{p_1-1}
\mu_{G}(x;f)
\left
\{
\int\limits^{Ax}_{0}
y^{p-p_1-1}
\,dy	
\right
\}
\,dx
\\=
\frac{A^{p-p_1}}{p-p_1}
\int\limits^{+\infty}_0
x^{p_1-1+p-p_1}
\mu_{G}(x;f)
\,dx
.
\end{multline}
Similarly, making a substitution $x-z \rightarrow x$ and using \eqref{EQ:decomposition} we see that $I_2$ is
\begin{multline}
\label{EQ:THM_Marc_Interpol_Un_Dual_aux_5}
I_2
=
M^{p_2}_2
p_2
\int\limits^{+\infty}_{0}y^{p-p_2-1}
\left
\{
\int\limits^{+\infty}_{z}(x-z)^{p_2-1}\mu_{G}(x;f)\,dx
\right
\}\,dy
\\=
M^{p_2}_2
p_2
\int\limits^{+\infty}_{0}y^{p-p_2-1}
\left
\{
\int\limits^{+\infty}_{0}x^{p_2-1}\mu_{G}(x+z;f)\,dx
\right
\}\,dy
\\=
M^{p_2}_2
p_2
\int\limits^{+\infty}_{0}y^{p-p_2-1}
\left
\{
\int\limits^{+\infty}_{0}x^{p_2-1}\mu_{G}(x;f_2)\,dx
\right
\}\,dy
\\=
M^{p_2}_2
p_2
\int\limits^{+\infty}_{0}
\left
\{
\int\limits^{+\infty}_{0}x^{p_2-1}
\mu_{G}(x;f_2)y^{p-p_2-1}\,dy
\right
\}\,dx
\\=
M^{p_2}_2
p_2
\int\limits^{+\infty}_{0}
\left
\{
\int\limits^{+\infty}_{Ax^{\frac1{\xi}}}x^{p_2-1}
\mu_{G}(x;f_2)y^{p-p_2-1}\,dy
\right
\}\,dx
\\=
M^{p_2}_2
p_2
\int\limits^{+\infty}_{0}
x^{p_2-1}
\mu_{G}(x;f_2)
\left
\{
\int\limits^{+\infty}_{Ax}y^{p-p_2-1}\,dy
\right
\}\,dx
\\=
\frac{A^{p-p_2}}{p_2-p}
M^{p_2}_2
p_2
\int\limits^{+\infty}_{0}
x^{p_2-1+p-p_2}
\mu_{G}(x;f_2)
\,dx
\\\leq
\frac{A^{p-p_2}}{p_2-p}
M^{p_2}_2
p_2
\int\limits^{+\infty}_{0}
x^{p_2-1+p-p_2}
\mu_{G}(x;f)
\,dx
.
\end{multline}
Collecting estimates
\eqref{EQ:THM_Marc_Interpol_Un_Dual_aux_3}, 
\eqref{EQ:THM_Marc_Interpol_Un_Dual_aux_4},
\eqref{EQ:THM_Marc_Interpol_Un_Dual_aux_5} we see that integral in \eqref{EQ:THM_Marc_Interpol_Un_Dual_aux_1} does not exceed
\begin{multline}
\label{EQ:THM_Marc_Interpol_Un_Dual_aux_6}
M^{p_1}_1
p_1
\frac{A^{p-p_1}}{p-p_1}
\int\limits^{+\infty}_0
x^{p-1}
\mu_{G}(x;f)
\,dx
+
M^{p_2}_2
p_2
\frac{A^{p-p_2}}{p_2-p}
\int\limits^{+\infty}_{0}
x^{p-1}
\mu_{G}(x;f_2) dx
.	
\end{multline}
Now, using the identity
$$
\int\limits^{+\infty}_0
x^{p-1}
\mu_{G}(x;f)
\,dx
=
\int\limits_{G}	
|f(u)|^{p}\,du
=
\|f\|^{p}_{L^p(G)},
$$
and inequalities \eqref{EQ:THM_Marc_Interpol_Un_Dual_aux_1} and \eqref{EQ:THM_Marc_Interpol_Un_Dual_aux_6}
we get
$$
	\|Af\|^p_{\ell^p(\Gh)}
	\leq
	\left(
	M^{p_1}_1
	p_1
	\frac{A^{p-p_1}}{p-p_1}
	+
	M^{p_2}_2
	p_2
	\frac{A^{p-p_2}}{p_2-p}
	\right)^p
	\|f\|^p_{\ell^p(\Gh)}.
$$
Next we set
$$
	A=M^{\frac{p_1}{p_1-p_2}}_1 M_{2}^{\frac{p_2}{p_2-p_1}}.
$$
A simple computation shows that
$$
M^{p_1}_1A^{p-p_1}
=
M^{p_2}_2
A^{p-p_2}
=
M^{\frac{p_1(p_2-p)}{p_2-p_1}}_1
M^{\frac{p_2(p_1-p)}{p_1-p_2}}_2
=
M^{1-\theta}_1 M^{\theta}_2,\quad \frac1p=\frac{1-\theta}{p_1}+\frac{\theta}{p_2}.
$$
Finally, we have
$$
\|Af\|_{\ell^p(\Gh)}
\leq
K_{p,p_1,p_2}
M^{1-\theta}_1 M^{\theta}_2
\|f\|_{L^p(G)},
$$
where
$$
	K_{p,p_1,p_2}
	=
\left(
\frac{p_1}{p-p_1}+
\frac{p_2}{p_2-p}
\right)^{\frac1p}.
$$
This completes the proof.
\end{proof}


\begin{thebibliography}{CGM93}

\bibitem[Ank90]{Anker:AM-1990}
J.-P. Anker.
\newblock {${\bf L}_p$} {F}ourier multipliers on {R}iemannian symmetric spaces
  of the noncompact type.
\newblock {\em Ann. of Math. (2)}, 132(3):597--628, 1990.

\bibitem[Arh12]{Arhancet:CM-2012}
C.~Arhancet.
\newblock Unconditionality, {F}ourier multipliers and {S}chur multipliers.
\newblock {\em Colloq. Math.}, 127(1):17--37, 2012.

\bibitem[BL76]{BL2011}
J.~Bergh and J.~L{{\"o}}fstr{{\"o}}m.
\newblock {\em Interpolation spaces. {A}n introduction}.
\newblock Springer-Verlag, Berlin-New York, 1976.
\newblock Grundlehren der Mathematischen Wissenschaften, No. 223.

\bibitem[CdG71]{Coifman-deGuzman:SU2-Argentina-1970}
R.~R. Coifman and M.~de~Guzm{{\'a}}n.
\newblock Singular integrals and multipliers on homogeneous spaces.
\newblock {\em Rev. Un. Mat. Argentina}, 25:137--143, 1970/71.
\newblock Collection of articles dedicated to Alberto Gonz{{\'a}}lez
  Dom{\'{\i}}nguez on his sixty-fifth birthday.

\bibitem[CGM93]{Cowling-Giulini-Meda:DMJ-1993}
M.~Cowling, S.~Giulini, and S.~Meda.
\newblock {$L^p$}-{$L^q$} estimates for functions of the {L}aplace-{B}eltrami
  operator on noncompact symmetric spaces. {I}.
\newblock {\em Duke Math. J.}, 72(1):109--150, 1993.

\bibitem[CKS11]{Cowling-Klima-Sikora:spectral-TAMS-2011}
M.~G. Cowling, O.~Klima, and A.~Sikora.
\newblock Spectral multipliers for the {K}ohn sublaplacian on the sphere in
  {$\mathbb{C}^n$}.
\newblock {\em Trans. Amer. Math. Soc.}, 363(2):611--631, 2011.

\bibitem[Cow74]{Cowling:PhD}
M.~G. Cowling.
\newblock {\em Spaces $A^q_p$ and $L^p$-$L^q$ Fourier multipliers}.
\newblock PhD thesis, The Flinders University of South Australia, 1974.

\bibitem[CS01]{Cowling-Sikora:SU2-sublaplacian}
M.~Cowling and A.~Sikora.
\newblock A spectral multiplier theorem for a sublaplacian on {$\rm SU(2)$}.
\newblock {\em Math. Z.}, 238(1):1--36, 2001.

\bibitem[CW71a]{coifman+weiss_lnm}
R.~R. Coifman and G.~Weiss.
\newblock {\em Analyse harmonique non-commutative sur certains espaces
  homog{\`e}nes}.
\newblock Lecture Notes in Mathematics, Vol. 242. Springer-Verlag, Berlin,
  1971.
\newblock {{\'E}}tude de certaines int{{\'e}}grales singuli{{\`e}}res.

\bibitem[CW71b]{Coifman-Weiss:SU2-Argentina-1970}
R.~R. Coifman and G.~Weiss.
\newblock Multiplier transformations of functions on {${\rm SU}(2)$} and {$\sum
  _2$}.
\newblock {\em Rev. Un. Mat. Argentina}, 25:145--166, 1971.
\newblock Collection of articles dedicated to Alberto Gonz{{\'a}}lez
  Dom{\'{\i}}nguez on his sixty-fifth birthday.

\bibitem[CW74]{Coifman-Weiss:central-multipliers-BAMS}
R.~R. Coifman and G.~Weiss.
\newblock Central multiplier theorems for compact {L}ie groups.
\newblock {\em Bull. Amer. Math. Soc.}, 80:124--126, 1974.

\bibitem[Gau66]{Gaudry:LpLq-loc-cpt-abel-gps-PJM-1966}
G.~I. Gaudry.
\newblock Multipliers of type {$(p,\,q)$}.
\newblock {\em Pacific J. Math.}, 18:477--488, 1966.

\bibitem[GT80]{Travaglini1980}
S.~Giulini and G.~Travaglini.
\newblock {$L^{p}$}-estimates for matrix coefficients of irreducible
  representations of compact groups.
\newblock {\em Proc. Amer. Math. Soc.}, 80(3):448--450, 1980.

\bibitem[HL27]{HL}
G.~H. Hardy and J.~E. Littlewood.
\newblock Some new properties of {F}ourier constants.
\newblock {\em Math. Ann.}, 97(1):159--209, 1927.

\bibitem[H{\"{o}}r60]{Hormander:invariant-LP-Acta-1960}
L.~H{\"{o}}rmander.
\newblock Estimates for translation invariant operators in {$L^{p}$}\ spaces.
\newblock {\em Acta Math.}, 104:93--140, 1960.

\bibitem[HR74]{HR}
E.~Hewitt and K.~A. Ross.
\newblock {Rearrangements of $L^r$ Fourier series on compact Abelian groups.}
\newblock {\em {Proc. Lond. Math. Soc. (3)}}, 29:317--330, 1974.

\bibitem[Mih56]{Mihlin:Lp-LGU}
S.~G. Mihlin.
\newblock On the theory of multidimensional singular integral equations.
\newblock {\em Vestnik Leningrad. Univ.}, 11(1):3--24, 1956.

\bibitem[Mih57]{Mihlin:Lp-Doklady}
S.~G. Mihlin.
\newblock Singular integrals in {$L_{p}$} spaces.
\newblock {\em Dokl. Akad. Nauk SSSR (N.S.)}, 117:28--31, 1957.

\bibitem[NT00]{Nurs_Tleukh_Mult}
E.~D. Nursultanov and N.~T. Tleukhanova.
\newblock Lower and upper bounds for the norm of multipliers of multiple
  trigonometric {F}ourier series in {L}ebesgue spaces.
\newblock {\em Funct. Anal. Appl.}, 34(2):86--88, 2000.

\bibitem[NT11]{Nursultanov-Tikhonov:JGA-2011}
E.~Nursultanov and S.~Tikhonov.
\newblock Net spaces and boundedness of integral operators.
\newblock {\em J. Geom. Anal.}, 21(4):950--981, 2011.

\bibitem[Nur98a]{NED}
E.~Nursultanov.
\newblock Net spaces and inequalities of {H}ardy-{L}ittlewood type.
\newblock {\em Sb. Math.}, 189(3):399--419, 1998.

\bibitem[Nur98b]{Nursultanov:MZ-1998}
E.~D. Nursultanov.
\newblock On multipliers of {F}ourier series in a trigonometric system.
\newblock {\em Mat. Zametki}, 63(2):235--247, 1998.

\bibitem[RR86]{Ricci-Rubin:transferring-F-mult-SU2-AJM-1986}
F.~Ricci and R.~L. Rubin.
\newblock Transferring {F}ourier multipliers from {${\rm SU}(2)$} to the
  {H}eisenberg group.
\newblock {\em Amer. J. Math.}, 108(3):571--588, 1986.

\bibitem[RT10]{RT}
M.~Ruzhansky and V.~Turunen.
\newblock {\em Pseudo-differential operators and symmetries. Background
  analysis and advanced topics}, volume~2 of {\em Pseudo-Differential
  Operators. Theory and Applications}.
\newblock Birkh{\"a}user Verlag, Basel, 2010.

\bibitem[RT13]{Ruzhansky+Turunen-IMRN}
M.~Ruzhansky and V.~Turunen.
\newblock Global quantization of pseudo-differential operators on compact {L}ie
  groups, {$\rm SU(2)$}, 3-sphere, and homogeneous spaces.
\newblock {\em Int. Math. Res. Not. IMRN}, (11):2439--2496, 2013.

\bibitem[RW13]{RuWi2013}
M.~Ruzhansky and J.~Wirth.
\newblock On multipliers on compact {L}ie groups.
\newblock {\em Funct. Anal. Appl.}, 47(1):87--91, 2013.

\bibitem[RW15]{RuWi2015}
M.~Ruzhansky and J.~Wirth.
\newblock
$L^p$ Fourier multipliers on compact Lie groups.
\newblock {\em Math. Z.}, 280: 621--642, 2015.

\bibitem[Ste70]{Stein1970}
E.~M. Stein.
\newblock {\em Singular integrals and differentiability properties of
  functions}.
\newblock Princeton Mathematical Series, No. 30. Princeton University Press,
  Princeton, N.J., 1970.

\bibitem[Vil68]{Vilenkin:BK-eng-1968}
N.~J. Vilenkin.
\newblock {\em Special functions and the theory of group representations}.
\newblock Translated from the Russian by V. N. Singh. Translations of
  Mathematical Monographs, Vol. 22. American Mathematical Society, Providence,
  R. I., 1968.

\bibitem[Vre74]{Vretare:MS-1974}
L.~Vretare.
\newblock On {$L_{p}$} {F}ourier multipliers on a compact {L}ie-group.
\newblock {\em Math. Scand.}, 35:49--55, 1974.

\bibitem[Wei72]{Weiss_N:Lp-biinvariant}
N.~J. Weiss.
\newblock {$L^{p}$} estimates for bi-invariant operators on compact {L}ie
  groups.
\newblock {\em Amer. J. Math.}, 94:103--118, 1972.

\bibitem[Zyg56]{Zygmund:Marc-JMPA-1956}
A.~Zygmund.
\newblock On a theorem of {M}arcinkiewicz concerning interpolation of
  operations.
\newblock {\em J. Math. Pures Appl. (9)}, 35:223--248, 1956.

\end{thebibliography}
\end{document}